\newtheorem{lemma}{Lemma}
\newtheorem{theorem}{Theorem}
\newtheorem{remark}{Remark}
\newcommand{\R}{\mathbb{R}}
\newcommand{\N}{\mathbb{N}}
\newcommand{\E}{\mathbb{E}}
\newcommand{\PP}{\mathbb{P}}
\newcommand{\Var}{\operatorname{Var}}
\newcommand{\Cov}{\operatorname{Cov}}
\providecommand{\norm}[1]{\lVert #1 \rVert}
\begin{document}

\renewcommand{\thefootnote}{\fnsymbol{footnote}}

\title{Hurst index estimation in stochastic differential equations  driven by fractional Brownian motion}
\author{Jan Gairing\bigskip\\
	\it Mathematisches Institut\\
	\it Ludwig-Maximilians-Universit\"at M\"unchen\\
% 	\tt gairing@math.lmu.de\\ 
	\and 
	Peter Imkeller\bigskip\\
	\it Institut f\"ur Mathematik\\
	\it Humboldt-Universit\"at zu Berlin\\
	\and
	Radomyra Shevchenko\bigskip\\
	\it Fakult\"at f\"ur Mathematik\\
	\it Technische Universit\"at Dortmund\\
% 	\tt Radomyra.Shevchenko@math.tu-dortmund.de\\ 
	\and
	Ciprian Tudor\bigskip\\
	\it D\'epartement de Math\'ematiques\\
	\it Universit\'e de Lille 1
	}

\maketitle

\begin{abstract}
We consider the problem of Hurst index estimation for solutions of  stochastic differential equations driven by an additive fractional Brownian motion. 
Using techniques of the Malliavin calculus, we analyze the asymptotic behavior of the quadratic variations of the solution, defined via higher order increments. Then we apply our results to construct and study estimators for the Hurst index.
\end{abstract}

\vskip0.3cm

{\bf 2010 AMS Classification Numbers:} 60G15, 60H05, 60G18.

\vskip0.3cm

{\bf Key Words and Phrases}: Hurst index estimation; Stochastic differential equation; fractional Brownian motion; Quadratic variation; Malliavin calculus; Central limit theorem.

\section{Introduction}
Fractional Brownian motion (fBm) is a widely used generalization of the standard Brownian motion that incorporates long-range dependence while preserving the self-similarity and Gaussianity.
These features make it a popular stochastic processes in the mathematical modelling of various complex systems from financial applications to surface growth (e.g. \cite{Bay, Bar}).
The need to detect and analyze such systems has led to the development of many, now classical, statistical instruments, including wavelet analysis, R/S estimators, generalized variations etc. and we refer to the monographs \cite{Ber,EM,T} for further reference.

Fractional Brownian motion  is homogeneous in space. Many physical systems, however, are subject to an external force and are confined to certain locations with high probability.
It may therefore be appropriate to consider stochastic differential equations (SDE) as a generalization of the fBm model. 
fBm then takes the role of a random forcing which heavily influences the underlying dynamics.
Thus the identification of this random forcing is of particular interest.

In our work, we will consider an SDE with additive fractional Brownian noise, i.e.
\begin{equation}
\label{eq:1}
X_{t}= x+\int_{0} ^{t} f(s, X_{s}) ds+ B ^{H} _{t}, \hskip0.5cm t\in T
\end{equation}
where $T$ is an interval in $[0, \infty)$, $x\in \mathbb{R}$ is the initial state and $f$ is a measurable deterministic function satisfying suitable assumptions. Stochastic models as \eqref{eq:1} appear in mathematical finance (for example, in \cite{KiTa} the log-volatility is assumed to satisfy an SDE of the form \eqref{eq:1}). 
Other potential applications come from climatology, stochastic processes as in \eqref{eq:1} (with Gaussian or non-Gaussian noise) are utilized as models for problems related to the earth's energy balance, see \cite{HIP,Dit}.

In fBm-related models particular interest lies in the estimation of the Hurst index (see Section~\ref{sec:prel} below) since it constitutes  the characteristic parameter of the driving forcing.

The purpose of our endeavour is to estimate the Hurst index $H$ in \eqref{eq:1} based on the discrete observations of the process $(X_{t}) _{t\in T}$.
Here we will deploy a well-known method, based on the quadratic variations of $X$.
This method is known to work well for self-similar processes (see e.g. \cite{T} and references therein).

We will show that it can be also applied to \eqref{eq:1}, although $X$ is not a self-similar process. 
The method exploits the fact that the absolutely continuous integral component of \eqref{eq:1} does not affect the roughness captured by the quadratic variation of $X$. 

We will define the sequence $V (a, n, \Delta, B ^{H})$ (see equation \eqref{eq:7} below) of the so-called {\it quadratic $a$-variations}, defined in terms of higher order increments of $X$ over {\it a filter} $a$. 
We give the limit behavior in distribution of this sequence making use of the Malliavin calculus to handle correlations and combine it with already known results concerning the variations of the fBm.
Interestingly, in this setting the case $H<\frac{1}{2}$ is easier to handle than the case of $H>\frac{1}{2}$. 
This is due to the fact that, when studying the limit of the sequence \eqref{eq:7}, one needs to take into account the correlations between the increments of the fBm, but also the correlations between the increments of the fBm and the increments of the Lebesque integral in \eqref{eq:1}. If $H<\frac{1}{2}$ these joint correlations are always dominated by those of the fBm, which is not the case for $H>\frac{1}{2}$, when a supplementary assumption is needed on the mesh $\Delta $ in \eqref{eq:7}.

By a standard procedure, we then construct a quadratic variation estimator for the Hurst index of the model \eqref{eq:1}. We prove its consistency and its asymptotic normality, using the limit behavior of the quadratic $a$-variations.

Our work is structured as follows. In Section~\ref{sec:prel} we introduce the objects of study, fBm, filters $a$ and the corresponding quadratic $a$-variations of a process. We also quote the underlying results on the behavior of the $a$-variations of fBm. 
Section~\ref{sec:sde} studies the properties and the $a$-variations of the solution to \eqref{eq:1}, via the techniques of the Malliavin calculus.
Section~\ref{sec:estim} is devoted to the parameter estimation of the Hurst index from discrete observations of $X$.
In Section~\ref{sec:simul} we present simulations and discuss statistical properties of the derived estimators.
In the Appendix we give a short review of the definitions and results from Malliavin calculus relevant to this work.

\section{Preliminaries: Fractional Brownian motion and its variations}\label{sec:prel}

A \emph{fractional Brownian motion} $(B^H_t)_{t\in T}$ with Hurst index $H\in (0,\,1)$ is a centered Gaussian process on the interval $T$ (in this work we consider $T=[0,\,1]$ or $T=\mathbb R^+$) with covariance function\
\begin{equation}\label{eq:2}
\mathbb E[B^H_t B^H_s]=\frac{1}{2}(t^{2H}+s^{2H}-|t-s|^{2H})
\quad \text{for}\ s,\,t\in T\ .\\
\end{equation}
From this definition it is apparent that for $H=\tfrac12$ we have that
$\mathbb E[B^H_t B^H_s]=\min(s,t)$ and the fBm with $H=\tfrac12$ is the standard Brownian motion. 

For a natural number $p\in\mathbb N$ let us consider a $(p+1)$-tuple $a=(a_0,\dots , a_p)$ of real numbers with zero sum, i.e.
\begin{equation*}
\sum_{q=0} ^{p} a_{q}=0\ .
\end{equation*}
Such a $(p+1)$-tuple $a=(a_0,\dots , a_p)$ will be called a {\it filter} of length $p+1$.
The \emph{order} of a filter $a$, denoted by $M(a)$, is defined to be the order of the first non-zero moment of the $(p+1)$-tuple $a$, i.e.\
\begin{equation}
\label{eq:3}
\sum_{i=0}^{p}a_i i^k=0 \text{ for }0\leq k < M(a)\quad\text{ and }\quad\sum_{i=0}^{p}a_i\, i^{M(a)}\neq 0\ .
\end{equation}
We will frequently need the partial sum of the components of $a$ and use the notation 
\begin{equation}\label{eq:4}
b_{i}=\sum_{k=0} ^{i} a_{k} \quad\mbox{for}\quad i=0,1,\dots,p.
\end{equation}
Since by definition a filter has zero sum, clearly for any filter $a$ we have $M(a)\geq 1$. For instance, $a=(a_{0}, a_{1}) = (-1, 1) $ is a filter of order 1 and of length 2, while $a=(a_{0}, a_{1}, a_{2})=(1,-2, 1) $ is a filter of order 2 and of length 3. 
%\marginpar{Somethings wrong! $a=(-1,1)$?}

For a process $(X_t)_{t\in T}$, a filter $a$ of length $p+1$ and a fixed \emph{mesh size} $\Delta>0$ we consider the sum\
\[\Delta_a X_j := \sum_{i=0}^p a_i X_{(i+j)\Delta}\]
 with $j\in\mathbb N$ such that $(p+j)\Delta \in T$. For example, if $a=(a_{0}, a_{1}) = (-1, 1) $, then $\Delta_a X_j =X_{(j+1)\Delta}- X_{j\Delta}$ while if  $a=(a_{0}, a_{1}, a_{2})=(1,-2, 1) $ then $\Delta_a X_j =\Delta_{j\Delta} -2\Delta _{(j+1) \Delta} + X_{(j+2)\Delta}$.

We will refer to $\Delta_a X_j$ as the \emph{increment of the process $X$ at time $j$ over the filter $a$}. 
Note that due to the zero sum condition on $a$ we can rewrite $\Delta_a X_j$ in the following way:\
\begin{equation}
\label{eq:5}
\begin{aligned}
\Delta_a X_j &= \sum_{i=0}^p a_i X_{(i+j)\Delta} \\
&=a_0 (X_{j\Delta}-X_{(j+1)\Delta})+(a_0+a_1)X_{(j+1)\Delta}+a_2 X_{(j+2)\Delta}+\dots+ a_p X_{(j+p)\Delta} =\cdots\\
% &\quad \vdots\\
&= a_0 (X_{j\Delta}-X_{(j+1)\Delta})+\dots +\sum_{i=0}^{p-1} a_i(X_{(j+p-1)\Delta}-X_{(j+p)\Delta})  +\underbrace{\sum_{i=0}^p a_i X_{(j+p)\Delta}}_{=0}\\
&=\sum_{i=0}^{p-1} \sum_{k=0}^i a_k (X_{(i+j)\Delta}-X_{(i+j+1)\Delta})
=\sum_{i=0}^{p-1} b_i (X_{(i+j)\Delta}-X_{(i+j+1)\Delta}).
\end{aligned}
\end{equation}
We will refer to this form as \emph{differences representation}.
The correlation of increments of the fBm over a filter plays an important role in our calculations. From \eqref{eq:2} we can see that for a zero sum vector $a\in\mathbb R^{(p+1)}$ and $i,\,j\in\N$ such that $\Delta_a B^H_i$, $\Delta_a B^H_j$ are well defined one has\
\begin{equation}
\label{eq:6}
\Cov(\Delta_a B^H_i,\,\Delta_a B^H_j)=-\frac{\Delta^{2H}}{2}\sum_{k=0}^p \sum_{l=0}^p a_k a_l |i+k-j-l|^{2H}.
\end{equation}

For a filter $a\in\mathbb R^{p+1}$ and a fractional Brownian motion $(B^H_t)_{t\in T}$ 
we define its {\it {(normalized)} quadratic $a$-variation} in the following way:\
\begin{equation}\label{eq:7}
V(a,\, n, \Delta, \,B^H)= \frac{1}{n}\sum_{j=1}^{n-p} \left(\frac{(\Delta_a B^H_j)^2}{\sigma_{a,\, \Delta}}-1\right) ,
\end{equation}
where $n\in\mathbb N$ with $n-p>0$ is the number of discrete observations 
on a grid of mesh size $\Delta >0$ (that may depend on $n$)
and 
\begin{equation}\label{dah}
\sigma_{a,\, \Delta}=\Var (\Delta_a B^H_j)=-\frac{\Delta^{2H}}{2}\sum_{k=0}^p \sum_{l=0}^p a_k a_l \vert k-l\vert ^{2H}.
\end{equation}
%\marginpar{We should definitively add quadratic variation here!}
In what follows, we will assume that $\Delta=n^{-\alpha}$ for some $\alpha >0$ whenever the quadratic $a$-variation is considered. Observe that the choice $a=(-1,\,1)$ and $\alpha =1$ would yield the formula for the usual normalized quadratic variation of the fBm (see e.g. \cite{T}).

Let us recall the main result concerning the asymptotic behavior of the quadratic $a$-variation of fBm (see \cite{Coeur} and \cite{ILa}). It states that the sequence \eqref{eq:7} converges to zero almost surely for any filter $a$ and if $M(a)> H +\frac{1}{4}$ then  it satisfies a central limit theorem. 

\begin{theorem}\label{th:1}
% Let $(B^H_t)_{t\in T}$ be a fractional Brownian motion with Hurst index $H\in (0, 1)$ and let $a=(a_{0},.., a_{p})$ be a filter of length $(p+1)$ and order $M(a)$. Then
Let $(B^H_t)_{t\in T}$ be a fractional Brownian motion with Hurst index $H\in (0, 1)$ and let $a$ be a filter of order $M(a)$ and of length $p+1$ with $p\geq 1$. Then
\begin{enumerate} 
\item $ V(a,\, n,\, \Delta,\, B^H) \stackrel{\text{a.s.}}{\to} 0 $ as $n\to \infty.$

\item If $M(a)> H+\frac{1}{4} $, then 
\[\sqrt{n} V(a,\, n,\, \Delta,\, B^H)\stackrel{(d)}{\to}N (0,\,\sigma_H),\]
where $\sigma_{H}>0$ is an explicit constant. 
\end{enumerate}
\end{theorem}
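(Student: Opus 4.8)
The plan is to recognise that the normalised increments form a stationary standard Gaussian sequence and that the summand in \eqref{eq:7} is a second Hermite functional, so that both $V$ and $\sqrt n\,V$ live in the second Wiener chaos; I would then exploit the decay of correlations, combined with hypercontractivity for part (1) and with the fourth moment theorem (or, equivalently, the Breuer--Major theorem) for part (2). Concretely, I would set $Y_j:=\Delta_a B^H_j/\sqrt{\sigma_{a,\Delta}}$, which by \eqref{dah} is centred Gaussian with unit variance and by \eqref{eq:6} stationary, with correlation $\rho(m):=\Cov(Y_j,Y_{j+m})$ depending only on $m$. Since $y^2-1$ is the second Hermite polynomial, I would write $Y_j=I_1(e_j)$ for unit vectors $e_j$ in the Hilbert space $\mathcal H$ associated with $B^H$, with $\langle e_i,e_j\rangle_{\mathcal H}=\rho(i-j)$, so that $Y_j^2-1=I_2(e_j\otimes e_j)$ and $\sqrt n\,V=I_2(f_n)$ with $f_n=n^{-1/2}\sum_{j=1}^{n-p}e_j^{\otimes2}$.

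The heart of the argument is an estimate on the decay of $\rho$. From \eqref{eq:6} and \eqref{dah},
\[
\rho(m)=\frac{\sum_{k,l}a_k a_l\,|m+k-l|^{2H}}{\sum_{k,l}a_k a_l\,|k-l|^{2H}}.
\]
Writing $g(x)=|x|^{2H}$ and Taylor-expanding $g(m+(k-l))$ about $m$, I would invoke the order conditions \eqref{eq:3} to cancel the low-order terms: expanding $(k-l)^j$ shows that $\sum_{k,l}a_k a_l(k-l)^j$ factors through $\big(\sum_k a_k k^r\big)\big(\sum_l a_l l^{j-r}\big)$, which vanishes unless $r\ge M(a)$ and $j-r\ge M(a)$, hence unless $j\ge 2M(a)$. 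The first surviving term is therefore of order $g^{(2M(a))}(m)\asymp m^{2H-2M(a)}$, giving the key bound $\rho(m)=O\big(m^{2H-2M(a)}\big)$ as $m\to\infty$. Consequently $\sum_m\rho(m)^2<\infty$ exactly when $2(2H-2M(a))<-1$, i.e. when $M(a)>H+\tfrac14$ --- precisely the threshold appearing in part (2).

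For part (1), using $\Cov(Y_i^2,Y_j^2)=2\rho(i-j)^2$ for standard jointly Gaussian pairs, I would compute
\[
\Var(V)=\frac{2}{n^2}\sum_{i,j=1}^{n-p}\rho(i-j)^2\le\frac{2}{n}\sum_{|m|<n}\rho(m)^2 .
\]
Since $M(a)\ge 1>H$ forces $\rho(m)^2\to0$, a Cesàro argument gives $\Var(V)\to0$ for every filter, at a polynomial rate (with exponent $\min(1,4M(a)-4H)>0$, up to a logarithm at criticality). As $V$ lies in a fixed chaos, hypercontractivity yields $\E[V^{2k}]\le c_k\Var(V)^k$ with $c_k$ independent of $n$, so taking $k$ large makes $\sum_n\E[V^{2k}]<\infty$; Markov's inequality together with Borel--Cantelli then gives $V\to0$ almost surely.

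For part (2), assuming $M(a)>H+\tfrac14$ so that $\sum_m\rho(m)^2<\infty$, I would first verify by dominated convergence that
\[
\Var(\sqrt n\,V)=\frac{2}{n}\sum_{|m|<n-p}(n-p-|m|)\,\rho(m)^2\longrightarrow 2\sum_{m\in\mathbb Z}\rho(m)^2=:\sigma_H>0,
\]
the positivity being clear from the $m=0$ term alone. To pass from convergence of the variance to the stated central limit theorem I would apply the Nualart--Peccati fourth moment theorem, for which it suffices to show that the contraction norm vanishes,
\[
\norm{f_n\otimes_1 f_n}^2=\frac{1}{n^2}\sum_{i,j,k,l}\rho(i-j)\rho(k-l)\rho(i-k)\rho(j-l)\longrightarrow0 ;
\]
this quadruple sum is $O(1/n)$ under $\sum_m\rho(m)^2<\infty$. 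Equivalently, the whole statement follows at once from the Breuer--Major theorem, whose hypothesis is exactly that the function $y\mapsto y^2-1$, of Hermite rank $2$, satisfies $\sum_m\rho(m)^2<\infty$, the limiting variance again being $\sigma_H$. The hard part, on which everything downstream rests, is the correlation decay $\rho(m)=O(m^{2H-2M(a)})$: once it is in hand, the summability dichotomy, the exact threshold $M(a)>H+\tfrac14$, the almost sure limit and the contraction estimate all follow, and the only genuine delicacy is controlling the Taylor remainder of $|x|^{2H}$ (smooth away from the origin, hence expandable for $m$ large) in tandem with the moment cancellations from \eqref{eq:3}.
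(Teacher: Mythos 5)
The paper offers no proof of Theorem~\ref{th:1}: it is quoted as a known result from Istas--Lang \cite{ILa} and Coeurjolly \cite{Coeur}, so there is no internal argument to compare against. Your proposal is, in substance, the standard proof of this statement and is correct in its main lines. The pivotal step --- the decay bound $\rho(m)=O(m^{2H-2M(a)})$, obtained by Taylor-expanding $|x|^{2H}$ at $m$ and observing that the moment conditions \eqref{eq:3} annihilate $\sum_{k,l}a_ka_l(k-l)^j$ for all $j<2M(a)$ --- is exactly the key lemma of \cite{ILa} and \cite{Coeur}, and it correctly produces the threshold $M(a)>H+\tfrac14$ as the $\ell^2$-summability condition on $\rho$. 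Where you diverge from those sources is only in the CLT machinery: they argue via moment/cumulant bounds for Gaussian quadratic forms (in the spirit of Breuer--Major \cite{BM}), whereas you place $\sqrt{n}\,V$ in the second Wiener chaos and invoke the fourth moment theorem; this is the modern packaging used e.g.\ in \cite{T} and is entirely legitimate. Your treatment of part (1) --- $\Var(V)\lesssim n^{-\min(1,\,4M(a)-4H)}$ (up to a logarithm), hypercontractivity on a fixed chaos, then Borel--Cantelli --- is also the standard route and works for every filter since $M(a)\ge 1>H$.

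Two small caveats. First, your parenthetical claim that the contraction norm is $O(1/n)$ under the sole hypothesis $\sum_m\rho(m)^2<\infty$ is too strong: the Young-inequality bound
\[
\norm{f_n\otimes_1 f_n}^2\;\le\;\frac{1}{n}\sum_{|m_1|,|m_2|,|m_3|<n}|\rho(m_1)\rho(m_2)\rho(m_3)\rho(m_2+m_3-m_1)|
\]
gives rate $1/n$ only when $\rho\in\ell^{4/3}$, i.e.\ when $M(a)>H+\tfrac38$; in the remaining range $H+\tfrac14<M(a)\le H+\tfrac38$ the quadruple sum still vanishes, but only like $n^{2-8(M(a)-H)}$ (a truncation argument, or the power decay of $\rho$, settles this), so the conclusion stands but the stated rate does not. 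Second, the Taylor remainder after order $2M(a)-1$ is itself of size $m^{2H-2M(a)}$ (the same order as the first surviving term), which is exactly what the upper bound needs --- you flag this as the delicate point, and it is, but it causes no difficulty since $|x|^{2H}$ is smooth away from the origin and $|m|>p$ keeps the expansion point away from $0$. Neither issue affects the validity of the argument.
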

In the construction of the estimators in Section~\ref{sec:estim} we need a multidimensional version of this statement which can also be found in \cite{ILa} and \cite{Coeur}:

\begin{theorem}\label{th:2}
Let $(B^H_t)_{t\in T}$ be a fractional Brownian motion with Hurst index $H\in (0, 1)$.
For $i=1,\dots , N$ let $a^i$ be a filter of order $M(a^i)> H+\frac{1}{4} $.
% and let further filters $a^i$ ($i=2,\dots , N$) be defined via $a^i_{(j)}:= a^1_{(k)}1_{\{j=ki\}}$.
Then
\[\sqrt{n} (V(a^1,\, n,\, \Delta,\, B^H),\dots, V(a^N,\, n,\, \Delta,\, B^H) ) \stackrel{(d)}{\to}N (0,\,\Sigma_H),\]
where $\Sigma_{H}$ is a positive definite matrix. 
%\begin{enumerate} 
%\item $ (V(a^1,\, n,\, \Delta,\, B^H),\dots V(a^N,\, n,\, \Delta,\, B^H) ) \stackrel{\text{a.s.}}{\to} (0,\dots , 0) $ as $n\to \infty$.
%\item If $M(a^i)> H+\frac{1}{4} $ for all $i=1,\dots , N$, then 
%\[\sqrt{n} (V(a^1,\, n,\, \Delta,\, B^H),\dots V(a^N,\, n,\, \Delta,\, B^H) ) \stackrel{(d)}{\to}N (0,\,\Sigma_H),\]
%where $\Sigma_{H}$ is a positive definite matrix. 
%\end{enumerate}
\end{theorem}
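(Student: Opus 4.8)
The plan is to realize each coordinate as an element of the second Wiener chaos and then invoke the multivariate fourth moment theorem of Peccati and Tudor, which reduces joint Gaussian convergence to two ingredients: marginal convergence of each component, and convergence of the covariance matrix. For each $i$ write the normalized increments $Y^i_j := \Delta_{a^i}B^H_j/\sqrt{\sigma_{a^i,\Delta}}$, which are standard Gaussian. Since $(Y^i_j)^2-1 = H_2(Y^i_j)$ with $H_2(x)=x^2-1$ the second Hermite polynomial, each variation takes the form
\[
\sqrt n\, V(a^i,n,\Delta,B^H) = \frac{1}{\sqrt n}\sum_{j=1}^{n-p_i} H_2(Y^i_j) = I_2(f^i_n),
\]
an element of the second chaos generated by $B^H$, where $a^i$ has length $p_i+1$ and $f^i_n$ is the associated symmetric kernel (see the Appendix). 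Thus the entire vector lives in the second chaos, which is the natural setting for the Peccati--Tudor theorem.

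That theorem states that for a sequence of vectors of multiple integrals of a fixed order, if the covariance matrices converge to some $\Sigma$ and each component converges in law to a one-dimensional Gaussian, then the vectors converge jointly to $N(0,\Sigma)$. Marginal convergence is precisely Theorem~\ref{th:1}, which applies to each coordinate because $M(a^i)>H+\tfrac14$. (Alternatively one could run a Cram\'er--Wold argument, applying the one-dimensional fourth moment theorem to each linear combination $\sum_i c_i\sqrt n\,V(a^i,n,\Delta,B^H)$, which is again a second-chaos variable.) Hence the substance of the proof is the convergence of the covariance entries.

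For the covariances I would use the product formula $\E[H_2(U)H_2(V)]=2(\E[UV])^2$ for jointly Gaussian $U,V$ to obtain
\[
n\,\Cov\bigl(V(a^i,n,\Delta,B^H),V(a^l,n,\Delta,B^H)\bigr)
= \frac{2}{n}\sum_{j,k} \left(\frac{\Cov(\Delta_{a^i}B^H_j,\Delta_{a^l}B^H_k)}{\sqrt{\sigma_{a^i,\Delta}\,\sigma_{a^l,\Delta}}}\right)^2 .
\]
By the natural extension of \eqref{eq:6}, the cross-covariance depends only on the lag $m=j-k$ and is homogeneous of degree $2H$ in $\Delta$, so, as for $\sigma_{a,\Delta}$ in \eqref{dah}, the normalization cancels the $\Delta$-dependence and the double sum becomes a Ces\`aro average of a single-index correlation sequence $\rho^{il}(m)^2$. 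Writing $m=j-k$ and counting the $\sim(n-|m|)$ pairs at each lag, the order condition $M(a^i),M(a^l)>H+\tfrac14$ guarantees the decay $\rho^{il}(m)=O(|m|^{2H-M(a^i)-M(a^l)})$ needed for $\sum_m \rho^{il}(m)^2<\infty$, whence dominated convergence yields $\Sigma_H^{il}:=2\sum_{m\in\mathbb{Z}}\rho^{il}(m)^2$. This settles the covariance ingredient.

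The main obstacle is upgrading positive semidefiniteness of $\Sigma_H$ to strict positive definiteness. I would argue by contradiction: if $c^\top\Sigma_H c=0$ for some nonzero $c\in\R^N$, then the limiting variance of the second-chaos variable $\sum_i c_i\sqrt n\,V(a^i,n,\Delta,B^H)$ vanishes. Passing to the spectral representation of the stationary filtered-increment sequences, this limiting variance can be written as the integral, against the strictly positive spectral density of the fBm increment process, of the squared modulus of a trigonometric polynomial built from $c$ and the transfer functions of the filters $a^i$. Such an integrand cannot vanish almost everywhere unless the trigonometric polynomial is identically zero, which (by the linear independence of the filters' transfer functions) forces $c=0$, a contradiction. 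This gives the strict positivity of $c^\top\Sigma_H c$ for every nonzero $c$ and hence the positive definiteness of $\Sigma_H$, completing the argument.
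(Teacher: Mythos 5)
First, note that the paper does not actually prove this theorem: it is quoted from the references \cite{ILa} and \cite{Coeur}, so there is no internal proof to compare against. Your architecture --- realizing each $\sqrt{n}\,V(a^i,n,\Delta,B^H)$ as the second-chaos element $\frac{1}{\sqrt n}\sum_j H_2(Y^i_j)$ and invoking the Peccati--Tudor multivariate fourth moment theorem, so that joint normality reduces to the marginal CLTs of Theorem~\ref{th:1} plus convergence of the covariance matrix --- is sound and is the standard modern replacement for the diagram-formula and cumulant arguments of the cited papers. The covariance step is also correct: after normalization the cross-correlation $\rho^{il}(m)$ no longer depends on $\Delta$, the filter orders give the decay $\rho^{il}(m)=O(|m|^{2H-M(a^i)-M(a^l)})$, and $M(a^i),M(a^l)>H+\frac14$ yields $2(M(a^i)+M(a^l))-4H>1$, hence square summability and the Ces\`aro limit $\Sigma_H^{il}=2\sum_{m}\rho^{il}(m)^2$.

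The genuine gap is in the positive definiteness step. Carrying your spectral computation to the end, with $A^i(\lambda)=\sum_q a^i_q e^{\mathrm{i}q\lambda}$ the transfer function of $a^i$, the degeneracy condition $c^\top\Sigma_H c=0$ is equivalent to $\sum_i c_i\,|A^i(\lambda)|^2/\hat\sigma_i\equiv 0$ (with $\hat\sigma_i=\Delta^{-2H}\sigma_{a^i,\Delta}$), i.e.\ to a linear dependence among the \emph{squared moduli} $|A^i|^2$ --- and this can occur for nonzero $c$. For instance, if $a^2$ is a shift of $a^1$ (so $A^2(\lambda)=e^{\mathrm{i}\lambda}A^1(\lambda)$) or $a^2=-a^1$, then $|A^2|^2=|A^1|^2$ and the two variation sequences coincide up to a boundary term, so $\Sigma_H$ is singular with $c=(1,-1)$. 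Your appeal to ``linear independence of the filters' transfer functions'' does not close this: the $A^i$ can be linearly independent while the $|A^i|^2$ are not, and nothing in the hypotheses of the theorem rules this out. What your argument actually proves is that $\Sigma_H$ is positive definite if and only if the functions $\lambda\mapsto|A^i(\lambda)|^2$ are linearly independent; to complete the proof one must either add this as a hypothesis or verify it for the concrete family of filters used (the cited references do the latter for structured families such as dilations of a fixed filter, which is also the situation in Section~\ref{sec:estim}). In fairness, the theorem as stated in the paper carries the same imprecision.
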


Notice that, in the case of a filter of order 1, we have the well-known restriction $H<\frac{3}{4}$ for the CLT of the sequence $V(a,\, n,\, \Delta,\, B^H)$ to hold. 
This restriction can be lifted by taking a filter of order greater than one, i.e. by considering higher-order increments of the fBm. 

\section{Stochastic differential equations driven by the fractional Brownian motion}\label{sec:sde}

This section introduces the main object of our study: stochastic differential equations driven by the fractional Brownian motion. 
We will then study how the central limit theorems of the previous section carry over to the $a$-variation of the solution to the SDE. 

\subsection{Stochastic differential equations with fBm}

For $t\in T\subseteq \R^+$ we consider the SDE\
\begin{equation}\label{eq:8}
X_t = x + \int_0^t f(s, X_s) ds + B^H_t\ ,
%\tag{S}
\end{equation}
where $(B^H_t)_{t\in T}$ is a fractional Brownian motion with Hurst index $H\in(0,1)$ and $x$ is a real number. We assume that $f\in C^{0,\, 1}_b (T\times\R)$ (i.e., continuous in the time component and continuously differentiable in the space component as well as bounded, together with his partial derivative with respect to the second variable,  in both components) with\
\begin{equation}
\label{eq:9}
\norm{f}_{\infty}+\norm{f'}_{\infty}\leq M,
\end{equation}
for a fixed constant $M> 0$.  We denote by $f' $ the derivative of $f$ with respect to its second variable while $\Vert \cdot \Vert \infty$   stands for the infinity norm. \\

In the course of the paper we will be interested in two cases:\
\begin{itemize}
\item[(S1)]: the SDE \eqref{eq:8} with $T=[0,\,1]$,\
\item[(S2)]: the SDE \eqref{eq:8} with $T=\R^+$ for $H\leq \frac{1}{2}$.%The existence of its (strong) solution follows from the existence and uniqueness result given in \cite{NOk}.\
\end{itemize}

The existence of pathwise solutions to \eqref{eq:8} in both cases can be seen by considering the process $X-B^H$ and thus reducing (S) to an ordinary differential equation. We refer to \cite{NO}, \cite{Bes} or \cite{Ngu}, among others, for the existence and uniqueness of the solution under assumption \eqref{eq:9}.

Clearly, solutions to (S2) also solve (S1), if restricted to the unit interval. Therefore statements made for (S2) also apply to (S1). 
We make this distinction mainly for notational reasons: in (S1) we consider only $\alpha \geq 1$ (where $\Delta = n^{-\alpha}$), such that we do not leave the unit interval as the number of observations ($n$) grows. 
In (S2) $\alpha$ is allowed to be less than one, and the observation window $n\Delta$ grows with $n$.
 
We further denote by $Y$ the absolutely continuous component of the equation \eqref{eq:8}, i.e.
\begin{equation}
\label{eq:10}
Y_t=\int_0^t f(s, X_s)ds, \mbox{ for every } t\in T.
\end{equation}

If $X$ is the solution to the SDE \eqref{eq:8} (in either the setting (S1) or (S2)) 
and $a\in\R^{p+1}$ is a filter of length $p+1$
we define the {\it quadratic $a$-variation} of $X$ to be
\[V(a,\, n, \Delta, \,X)= \frac{1}{n}\sum_{j=1}^{n-p} \left(\frac{(\Delta_a X_j)^2}{\sigma_{a,\, \Delta}}-1\right),\]
where the number of observations $n\in\N$ satisfies $n-p>0$ and the mesh size is chosen to satisfy $\Delta=n^{-\alpha}$ for some $\alpha >0$.
Here $ \sigma_{a,\, \Delta}=Var (\Delta_a B^H_j)$ is again the variance of the increment of $B^H$ along the filter  $a$.

\subsection{Central limit theorems for the quadratic $a$-variation of SDE}

This section comprises the core results of this work, combining the limit results of the fBm from Section~\ref{sec:prel} with estimates for the solution of SDE obtained via Malliavin calculus. 
We obtain central limit theorems for the SDE \eqref{eq:8} providing the basis for the estimation of $H$ to be discussed in Section~\ref{sec:estim}.
The results are presented in the Theorems~\ref{th:3} and \ref{th:4} where the cases $H<\frac{1}{2}$ and $H\in [\frac{1}{2},\,1)$ are treated separately.
The proof of Theorem~\ref{th:3} consists of direct estimates and does not require any assumptions on the order $M(a)$ of the filter $a$.

If $H\ge\frac{1}{2}$ these estimates are not sufficient and we rely on the Malliavin Calculus to provide a finer analysis of the correlation.
The actual proof of Theorem~\ref{th:4} is postponed to Section~\ref{sec:proof2} after some auxiliary results.

% the previous paragraph we will prove two central limit theorems for the quadratic $a$-variation of the solutions of the differential equations (S1) and (S2) for $H<\frac{1}{2}$ and $H\in \left[\frac{1}{2},\,1\right)$, thus providing a basis for the estimation of $H$ that will be discussed in Section \ref{estim}.\\

First we will consider the case $H<\frac{1}{2}$. Notice that in this case we have $M(a) >H+\frac{1}{4}$ for any filter $a$.\
\begin{theorem}\label{th:3}
Assume $H<\frac{1}{2}$, let $X$ be a solution of the SDE (S2) and let \textnormal{a} be a filter with $p+1$ components. For $\alpha > \frac{1}{2(1-H)}$ (with $\Delta = n^{-\alpha}$) we have\
\[\sqrt{n} V(a,\, n,\, \Delta,\, X)\stackrel{(d)}{\to} N (0,\, \sigma_H),\]
with $\sigma_H>0$ from Theorem~\ref{th:1}.
\end{theorem}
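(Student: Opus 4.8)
The plan is to exploit the additive structure of \eqref{eq:8}. Writing $X_t = x + Y_t + B^H_t$ with $Y$ as in \eqref{eq:10}, the zero-sum condition on the filter annihilates the constant $x$, so that $\Delta_a X_j = \Delta_a B^H_j + \Delta_a Y_j$ for every admissible $j$. Expanding the square I would split the quadratic $a$-variation as
\[
V(a,\,n,\Delta,\,X) = V(a,\,n,\Delta,\,B^H) + R_n, \qquad R_n := \frac{1}{n\,\sigma_{a,\,\Delta}} \sum_{j=1}^{n-p} \bigl( 2\,\Delta_a B^H_j\,\Delta_a Y_j + (\Delta_a Y_j)^2 \bigr).
\]
Since $H<\tfrac12$ forces $M(a)\geq 1 > H+\tfrac14$ for \emph{any} filter, Theorem~\ref{th:1} already yields $\sqrt{n}\,V(a,\,n,\Delta,\,B^H)\stackrel{(d)}{\to} N(0,\sigma_H)$. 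By Slutsky's theorem it therefore suffices to prove $\sqrt{n}\,R_n\to 0$ in probability, and the whole argument reduces to controlling the drift increments.

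The decisive ingredient I would establish first is a deterministic bound on the increments of the absolutely continuous part. Using the differences representation \eqref{eq:5} for $Y$ together with $\norm{f}_\infty\leq M$ from \eqref{eq:9}, each block $Y_{(i+j)\Delta}-Y_{(i+j+1)\Delta}=\int_{(i+j)\Delta}^{(i+j+1)\Delta} f(s,X_s)\,ds$ is bounded in absolute value by $M\Delta$, so that
\[
|\Delta_a Y_j| \leq M\Delta \sum_{i=0}^{p-1} |b_i| =: C\Delta.
\]
This is the only place the assumptions on $f$ enter; crucially, in the regime $H<\tfrac12$ this crude bound is already enough and no use of the order $M(a)$ is needed.

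It then remains to bound the two contributions to $\sqrt{n}\,R_n$, using $\sigma_{a,\,\Delta}=c_a\Delta^{2H}$ with $c_a>0$ the constant in \eqref{dah}. For the pure drift term, $(\Delta_a Y_j)^2\leq C^2\Delta^2$ would give a contribution of order $n^{1/2-\alpha(2-2H)}$, which already vanishes for $\alpha>\tfrac{1}{4(1-H)}$. The cross term is the binding one: by Cauchy--Schwarz and Jensen, together with $\E\bigl[\sum_j (\Delta_a B^H_j)^2\bigr]\leq n\,c_a\Delta^{2H}$, I would estimate
\[
\E\Bigl| \frac{\sqrt{n}}{n\,\sigma_{a,\,\Delta}} \sum_{j=1}^{n-p} 2\,\Delta_a B^H_j\,\Delta_a Y_j \Bigr| \leq \frac{2C}{\sqrt{c_a}}\, n^{1/2}\,\Delta^{1-H} = \frac{2C}{\sqrt{c_a}}\, n^{1/2-\alpha(1-H)},
\]
which tends to zero precisely when $\alpha>\tfrac{1}{2(1-H)}$ --- exactly the hypothesis of the theorem. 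Markov's inequality then delivers $\sqrt{n}\,R_n\to 0$ in probability, and Slutsky's theorem concludes. I expect the cross term $\sum_j \Delta_a B^H_j\,\Delta_a Y_j$ to be the main obstacle, since it is the interaction between the fBm increments and the drift increments that dictates the sharp threshold $\alpha>\tfrac{1}{2(1-H)}$; everything else is comfortably negligible. The argument is transparent here only because for $H<\tfrac12$ the correlations of the drift are dominated by those of the fBm, so a plain $L^1$/Cauchy--Schwarz estimate suffices --- precisely the asymmetry between the two regimes flagged in the introduction, which is why the case $H\geq\tfrac12$ requires the finer Malliavin analysis of Theorem~\ref{th:4}.
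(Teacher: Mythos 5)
Your proposal is correct and follows essentially the same route as the paper: the same decomposition of $V(a,n,\Delta,X)$ into the fBm variation plus a remainder, the same deterministic bound $|\Delta_a Y_j|\lesssim\Delta$ from the boundedness of $f$, and the same rates $n^{1/2-\alpha(2-2H)}$ and $n^{1/2-\alpha(1-H)}$ leading to the threshold $\alpha>\tfrac{1}{2(1-H)}$, combined with Slutsky. The only cosmetic difference is that you bound the cross term via Cauchy--Schwarz and Jensen, whereas the paper computes $\E[|\Delta_a B^H_j|]\lesssim\Delta^H$ directly from Gaussianity; both yield the identical estimate.
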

\begin{proof} 
By a binomial expansion we can write:\
\[\sqrt{n}V(a,\, n,\, \Delta,\, X) = \sqrt{n} V(a,\, n,\, \Delta,\, B^H) + R_n\]
with\
\[R_n=n^{-1\slash 2}\Delta ^ {-2H}\sum_{r=0}^1 C^{r} \sum_{i=0}^{n-p} (\Delta_a B^H_{i})^r (\Delta_a Y_{i})^{2-r}=:R_{n,\, 0}+R_{n,\, 1}\]
with some constants $C^0$, $C^1$.
Due to Slutsky's lemma and part (b) of Theorem~\ref{th:1} it is enough to show that $\E [|R_{n,\,r}|]\to 0$ for $r=0,\,1$. 
Using the differences representation \eqref{eq:5} we obtain with $b_{j}$ given by \eqref{eq:4}
\begin{equation}
\label{6s-1}
|\Delta_a Y_i|=\Bigl|\sum_{j=0}^{p-1}b_j\int_{(i+j)\Delta}^{(i+j+1)\Delta} f(s, X_s) ds \Bigr| \lesssim \Delta, 
\end{equation}
since $f$ is uniformly bounded by (\ref{eq:9}).  The  notation $f\lesssim g$ means that there exists a strictly positive constant $c$ such that $f\leq cg$.
Thus, we deduce for $R_{n,\,r}, r=0,1$:\
\[\E[|R_{n,\,r}|]\lesssim n^{-1\slash 2}\Delta^{-2H}\Delta ^{2-r}\sum_{i=0}^{n-p}\E[|\Delta_a B^H_i|^r].\]

Moreover, since for $i\in \{0,\dots n\}$, $\Delta_a B^H_i$ is normally\
 distributed, $\E[|\Delta_a B^H_i|]$ can be calculated directly:\
\[\E[|\Delta_a B^H_i|]=\frac{2\sqrt{\sigma_{a,\,\Delta}}}{\sqrt{2\pi}}\lesssim \Delta ^H,\]
since by \eqref{dah}, $\sigma_{a,\, \Delta}= -\Delta ^{2H}\underset{i>j}{\sum\limits_{i,\,j =0}^{p}} a_i a_j (i-j)^{2H}.$ Hence, we have\
 \[\E[|R_{n,\, 0}|]\lesssim \Delta^{-2H} n^{-1\slash 2}\Delta ^2 n= n^{1\slash2-\alpha (2-2H)} \]
 as well as\
 \[\E[|R_{n,\, 1}|]\lesssim \Delta^{-2H} n^{-1\slash 2}\Delta n \Delta^H= n^{1\slash2-\alpha (1-H)}, \]
both of which converge to zero because of our assumption on $\alpha$.\
\end{proof}

\begin{remark}
Note that for $H<1\slash 2$ one can choose $\alpha = 1$, thus considering the usual equidistant partition of the interval $[0,\, 1]$. But it is also possible to chose a mesh $\Delta$ less that $\frac{1}{n}$.\\
\end{remark}

In the second theorem the case $H\in\big[\frac{1}{2},\,1\big)$ is being considered. For this case we will assume $\alpha\geq 1$, which means that the observations do not leave the interval $[0,\,1]$. For $H\geq\frac{1}{2}$ the bounds on the correlation terms in the above proof fail to converge for $\alpha = 1$, therefore, we need more elaborate techniques to handle the remainder in Section~\ref{sec:proof2}. The assumption $\alpha\geq 1$ is purely technical: it permits the use of Lemma~\ref{lem:1} in Section~\ref{sec:aux}, the core estimate in our Malliavin calculus approach.

\begin{theorem}\label{th:4} 
Let $(B^H_t)_{t\in [0,\,1]}$ be a fractional Brownian motion with Hurst index $H\in\big[\frac{1}{2},\,1\big)$ and a filter $a$ satisfying $M(a) >H+\frac{1}{4}$. If the process $(X_t)_{t\in [0,\,1]}$ solves (S1)  and if $\alpha \geq 1$, $\alpha > \max \left(\frac{2H-1}{2-2H},\, \frac{1}{4-4H}\right)$
(with $\Delta = n^{-\alpha}$), we have\
\[\sqrt{n} V(a,\, n,\, \Delta,\, X) \stackrel{(d)} {\to} N (0,\, \sigma_H),\]
where $\sigma_H$ is the constant from Theorem~\ref{th:1}.\
\end{theorem}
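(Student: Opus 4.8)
The plan is to mimic the decomposition used in the proof of Theorem~\ref{th:3}, but to replace the crude pathwise ($L^1$) estimate of the cross term by a finer, Malliavin-calculus based second-moment analysis. Writing $\Delta_a X_j=\Delta_a B^H_j+\Delta_a Y_j$ and expanding the square, one obtains
\[
\sqrt{n}\,V(a,\,n,\,\Delta,\,X)=\sqrt{n}\,V(a,\,n,\,\Delta,\,B^H)+R_{n,\,0}+R_{n,\,1},
\]
where $R_{n,\,0}=n^{-1/2}\Delta^{-2H}C^{0}\sum_i(\Delta_a Y_i)^2$ is the pure drift term and $R_{n,\,1}=n^{-1/2}\Delta^{-2H}C^{1}\sum_i(\Delta_a B^H_i)(\Delta_a Y_i)$ is the cross term. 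Since $M(a)>H+\tfrac14$, part (b) of Theorem~\ref{th:1} yields $\sqrt{n}\,V(a,\,n,\,\Delta,\,B^H)\stackrel{(d)}{\to}N(0,\sigma_H)$, so by Slutsky's lemma it suffices to prove $R_{n,\,0}\to0$ and $R_{n,\,1}\to0$ in probability. The term $R_{n,\,0}$ I would bound exactly as in Theorem~\ref{th:3}: using $|\Delta_a Y_i|\lesssim\Delta$ from \eqref{6s-1} gives $\E[|R_{n,\,0}|]\lesssim n^{-1/2}\Delta^{-2H}\Delta^{2}n=n^{1/2-\alpha(2-2H)}$, which vanishes precisely under $\alpha>\frac{1}{4-4H}$. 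This explains the first of the two conditions on $\alpha$, and requires no Malliavin calculus.

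The main obstacle is the cross term $R_{n,\,1}$. For $H\ge\tfrac12$ the naive estimate $\E[|R_{n,\,1}|]\lesssim n^{1/2-\alpha(1-H)}$ would force $\alpha>\frac{1}{2(1-H)}\ge1$ and degenerate as $H\uparrow1$, because taking absolute values discards the cancellations in the correlation structure. Instead I would show $R_{n,\,1}\to0$ in $L^2$ by estimating $\E[R_{n,\,1}]$ and $\Var(R_{n,\,1})$ separately, both reducing to covariances of the form $\E[(\Delta_a B^H_i)(\Delta_a Y_j)]$ and $\E[(\Delta_a B^H_i)(\Delta_a Y_i)(\Delta_a B^H_j)(\Delta_a Y_j)]$. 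These I would evaluate via the integration-by-parts (duality) formula of the Malliavin calculus: representing $\Delta_a B^H_i=B^H(\varphi^a_i)$ with $\varphi^a_i\in\mathcal H$, one has $\E[F\,B^H(\varphi^a_i)]=\E[\langle DF,\varphi^a_i\rangle_{\mathcal H}]$ with $F=\Delta_a Y_j$. Differentiating \eqref{eq:8} gives the linear equation $D X_s=\mathds 1_{[0,\,s]}+\int_0^s f'(u,X_u)\,D X_u\,du$, whose solution is uniformly bounded in $\mathcal H$ because $\norm{f'}_\infty\le M$; this reduces each correlation to $\mathcal H$-inner products of indicator kernels against $\varphi^a_i$, i.e. to the elementary quantities $\E[B^H_s\,\Delta_a B^H_i]=-\tfrac12\sum_k a_k|s-(i+k)\Delta|^{2H}$. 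Controlling these is exactly the content of the core estimate Lemma~\ref{lem:1}, and this is where the technical assumption $\alpha\ge1$ enters.

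Carrying this out, the mean is harmless: one gets $|\E[R_{n,\,1}]|\lesssim n^{1/2}\Delta=n^{1/2-\alpha}\to0$ since $\alpha\ge1$. The delicate part is the variance, whose dominant contribution is
\[
n^{-1}\Delta^{-4H}\sum_{i,\,j}\E[\Delta_a Y_i\,\Delta_a Y_j]\,\E[\Delta_a B^H_i\,\Delta_a B^H_j].
\]
This term is the source of the second condition on $\alpha$: for $H>\tfrac12$ the filtered fBm correlations $\E[\Delta_a B^H_i\,\Delta_a B^H_j]$ are slowly decaying and need not be summable, so that $\sum_{i,\,j}\E[\Delta_a B^H_i\,\Delta_a B^H_j]$ may grow like $\Delta^{2H}n^{2H}$ rather than like $\Delta^{2H}n$. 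Combined with $|\E[\Delta_a Y_i\,\Delta_a Y_j]|\lesssim\Delta^{2}$ this produces $\Var(R_{n,\,1})\lesssim n^{2H-1}\Delta^{2-2H}=n^{2H-1-\alpha(2-2H)}$, which tends to $0$ exactly when $\alpha>\frac{2H-1}{2-2H}$. Hence both remainders vanish in probability under the stated hypotheses and Slutsky's lemma concludes the proof.

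I expect the hard part to be making the double-sum estimate rigorous, that is, bounding the off-diagonal Malliavin inner products $\langle D(\Delta_a Y_j),\varphi^a_i\rangle_{\mathcal H}$ and the slowly decaying correlation sums uniformly in $i,j$, while keeping track of the competition between the $\Delta^{2H}$-scaling of the fBm covariances and the $\Delta$-scaling of the drift increments. This is precisely the role of Lemma~\ref{lem:1} and the auxiliary results gathered in Section~\ref{sec:proof2}.
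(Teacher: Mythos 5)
Your overall strategy coincides with the paper's: the same decomposition $\sqrt{n}V(a,n,\Delta,X)=\sqrt{n}V(a,n,\Delta,B^H)+R_{n,0}+R_{n,1}$, Slutsky's lemma together with Theorem~\ref{th:1}, the crude bound for $R_{n,0}$, and a Malliavin-duality analysis of the cross term resting on Lemma~\ref{lem:1}; you also correctly identify the term $n^{-1}\Delta^{-4H}\sum_{i,j}\E[\Delta_aY_i\,\Delta_aY_j]\,\E[\Delta_aB^H_i\,\Delta_aB^H_j]$, the $\Delta^{2H}n^{2H}$ growth of the correlation sum, and the resulting condition $\alpha>\frac{2H-1}{2-2H}$. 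Two points, however, need repair.

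First, the step ``estimate $\Var(R_{n,1})$ directly'' is not as routine as you present it. Since $Y$ is neither Gaussian nor independent of $B^H$, the fourth moment $\E[(\Delta_aY_i)(\Delta_aB^H_i)(\Delta_aY_j)(\Delta_aB^H_j)]$ does not factorize into a product of covariances; iterating the duality formula on it produces, besides your ``dominant'' term, terms of the form $\E[\langle D\Delta_aY_i,h_j\rangle_{\mathcal H}\langle D\Delta_aY_j,h_i\rangle_{\mathcal H}]$ \emph{and} terms involving the second Malliavin derivative of $Y$ (from differentiating $\langle D\Delta_aY_i,h_i\rangle_{\mathcal H}$), which neither your sketch nor Lemma~\ref{lem:1} controls. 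The paper sidesteps second derivatives by first writing $\sum_j(\Delta_aY_j)B^H(h_j)=\delta(u)+\sum_j\langle D_\cdot\Delta_aY_j,\,h_j\rangle_{\mathcal H}$ via \eqref{eq:18}, disposing of the trace sum in $L^1$ (it is $O(n^{1/2}\Delta^{2-2H})$ by Lemma~\ref{lem:1}), and only then applying the first-order covariance identity \eqref{eq:19} to $\delta(u)$; the second term there is bounded by $\Delta^4$ per pair via Lemma~\ref{lem:1}, point 2, contributing $n\Delta^{4-4H}$ and hence again the condition $\alpha>\frac{1}{4-4H}$. You need this (or an equivalent) splitting, and then you obtain convergence of $R_{n,1}$ in probability rather than in $L^2$, which is all Slutsky requires.

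Second, your bound $|\E[R_{n,1}]|\lesssim n^{1/2}\Delta$ is too optimistic. One has $\E[(\Delta_aB^H_i)(\Delta_aY_i)]=\E[\langle D_\cdot\Delta_aY_i,\,h_i\rangle_{\mathcal H}]=O(\Delta^2)$ by Lemma~\ref{lem:1}, so after the normalization $n^{-1/2}\Delta^{-2H}$ the mean is $O(n^{1/2}\Delta^{2-2H})$, not $O(n^{1/2}\Delta)$. This still vanishes, but only thanks to the hypothesis $\alpha>\frac{1}{4-4H}$; for $H>\frac34$ the condition $\alpha\ge1$ alone does not suffice, so your justification ``since $\alpha\ge1$'' is incomplete.
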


\begin{remark}
All in all, for $H\in \left(\frac{1}{2},\,\frac{3}{4}\right)$ we have the convergence conditions $\alpha > \frac{1}{4-4H}$ and $\alpha > \frac{2H-1}{2-2H}$, both of which are satisfied for $\alpha \geq 1$. When $H>\frac{3}{4}$, we need to chose $\alpha >1$.
\end{remark}

\begin{remark}\label{rem:1}
The proofs of the Theorems \ref{th:3} and \ref{th:4} were based upon the demonstration that the differences $\sqrt{n}(V(a,\, n,\, \Delta,\, X)- V(a,\, n,\, \Delta,\, B^H))$ converge to zero in $L^1$. This implies that both theorems can be generalised to their multivariate versions, again by means of Slutsky's lemma and Theorem~\ref{th:2}.
\end{remark}

\subsection{Auxiliary results from Malliavin calculus}
\label{sec:aux}

%It has been shown in \cite{Bes} that the solution $X_t$ of (S1) is Malliavin differentiable and satisfies $D_s X_t=0$ for $s>t$ and $e^{-tM}\leq D_s X_t\leq e^{tM}$ almost everywhere for $s\leq t$. This implies that $Y_t$ is also Malliavin differentiable.\\

We postpone to the Appendix the technical definitions and results from Malliavin calculus. 
We refer to  \cite{Bes} for the fact that the solution $X$ of (S1) belongs to the Sobolev-space $\mathbb{D}^{1,\,2}$, implying that the process $Y$ defined above is also Malliavin differentiable (compare also the proofs of Lemma~5.1 and Lemma~5.3 in \cite{Ngu} for a generalisation with respect to the time dependence of the drift). 
Moreover, the Malliavin derivative of $X$ (with respect to the fBm $B^ {H}$) satisfies $D_s X_t=0$ for $s>t$ and
\[D_s X_t = \int_s^t f'(r,\,X_r)D_s X_r dr + 1\]
for $s\leq t$. This linear equation has the explicit solution\
\[D_s X_t = e^{\int_s^t f'(r,\,X_r)dr},\]
and similarly to the equation considered in \cite{Bes} this provides the bound\
\[e^{-tM}\leq D_s X_t\leq e^{tM}\]
almost everywhere for $s\leq t$, where $M$ is the bound on $\|f'\|_{\infty}$ defined above in (\ref{eq:9}).\\

Moreover, we need the following technical result, which analyzes the correlation between the increments of the processes $B^{H}$ and $Y$. 
Below, $\mathcal{H}$ is the canonical Hilbert space of the fBm (see Section~\ref{app}).

\begin{lemma} 
\label{lem:1}
 Let $H \geq \frac{1}{2}$ and $Y$ be defined by \eqref{eq:10} for the SDE (S1).
 Then \
\begin{enumerate}
\item
for every $s,\,t,\,u,\,v\in [0,\,1]$ with $s<t$, $u<v$, we have
\[\E[|\left\langle D_\cdotp(Y_{u}-Y_{v}),\, 1_{[{s},\,{t}]}(\cdotp) \right\rangle_{\mathcal{H}}|]\lesssim ({t}-{s})({u}-{v})\ ,\]
\item 
 for every $s_i,\,t_i,\,u_i,\,v_i\in [0,\,1]$ with $s_i<t_i$, $u_i<v_i$, $i=1,\,2$.
\begin{align*}
\E[|&\left\langle D_\cdotp(Y_{u_1}-Y_{v_1}),\, 1_{[{s_1},\,{t_1}]}(\cdotp) \right\rangle_{\mathcal{H}}\left\langle D_\cdotp(Y_{u_2}-Y_{v_2}),\, 1_{[{s_2},\,{t_2}]}(\cdotp) \right\rangle_{\mathcal{H}}|]\\
&\lesssim (t_1-s_1)(u_1-v_1)(t_2-s_2)(u_2-v_2)
\end{align*}
\end{enumerate}
\end{lemma}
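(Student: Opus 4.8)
The plan is to express the inner product $\left\langle D_\cdotp(Y_u - Y_v),\, 1_{[s,t]}(\cdotp)\right\rangle_{\mathcal{H}}$ explicitly and then bound its expectation. First I would use the definition of $Y$ in \eqref{eq:10} together with the chain rule for the Malliavin derivative to write
\[
D_r(Y_u - Y_v) = \int_v^u f'(w, X_w)\, D_r X_w\, dw,
\]
where the integration range reflects that $Y_u - Y_v = \int_v^u f(w,X_w)\,dw$ (adjusting signs so that $u<v$ gives the negative orientation). Substituting the explicit representation $D_r X_w = e^{\int_r^w f'(\rho,X_\rho)d\rho}\,1_{\{r\le w\}}$ derived above, together with the two-sided bound $e^{-wM}\le D_r X_w \le e^{wM}$, gives a completely explicit, pathwise-bounded expression for $D_r(Y_u - Y_v)$ as a function of $r$. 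The key gain is that $|D_r(Y_u - Y_v)| \lesssim |u-v|$ uniformly in $r\in[0,1]$, since $f'$ is bounded by $M$ and the exponential factors are bounded on the unit interval.

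The main obstacle is handling the $\mathcal{H}$-inner product itself, because for $H\ge\frac12$ the norm on $\mathcal{H}$ is given by a double integral against the singular kernel $H(2H-1)|r-\rho|^{2H-2}$ rather than by the $L^2$ inner product. Concretely, I would write
\[
\left\langle D_\cdotp(Y_u - Y_v),\, 1_{[s,t]}(\cdotp)\right\rangle_{\mathcal{H}}
= H(2H-1)\int_0^1\!\!\int_s^t D_r(Y_u - Y_v)\,|r-\rho|^{2H-2}\,d\rho\,dr,
\]
and then estimate in absolute value. Pulling out the uniform bound on $|D_r(Y_u - Y_v)|\lesssim|u-v|$, the task reduces to controlling $\int_0^1\!\int_s^t |r-\rho|^{2H-2}\,d\rho\,dr$. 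The inner integral over $\rho\in[s,t]$ of the singular kernel is integrable (since $2H-2 > -1$ for $H>\tfrac12$, and the kernel is bounded when $H=\tfrac12$), and carrying out the $\rho$-integration produces a factor controlled by the length $(t-s)$; the subsequent $r$-integration over $[0,1]$ contributes only a bounded constant. This yields the claimed bound $\lesssim (t-s)(u-v)$ in part~(1), with all constants depending only on $H$ and $M$.

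For part~(2) I would proceed by first applying Cauchy--Schwarz, or more directly by bounding the product of the two inner products by the product of their absolute values and then taking expectation. The crucial point is that the pathwise bounds used in part~(1) are deterministic — the exponential factors and $\|f'\|_\infty$ are uniformly controlled — so the two factors can be estimated independently and the expectation passes through without creating any cross-correlation difficulties. Each inner product contributes its own bound $\lesssim (t_i - s_i)(u_i - v_i)$ by the argument above, and multiplying the two yields the product bound. The deterministic nature of the estimates is what makes part~(2) follow essentially for free from part~(1); the only care required is to verify that the double application of the singular-kernel estimate remains uniform in the pathwise randomness, which it does because of the two-sided exponential bound on $D_sX_t$.
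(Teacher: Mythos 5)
Your proposal follows essentially the same route as the paper: chain rule plus the two-sided exponential bound on $D_sX_t$ to get the pathwise estimate $|D_r(Y_u-Y_v)|\lesssim |u-v|$, then the kernel representation \eqref{eq:17} of the $\mathcal{H}$-inner product, and for part (2) the observation that all bounds are deterministic so the two factors decouple. Two points need tightening, though. First, your justification of the key integral bound is imprecise as stated: for $r$ inside or near $[s,t]$ the inner integral $\int_s^t |r-\rho|^{2H-2}\,d\rho$ is of order $(t-s)^{2H-1}$, not $(t-s)$, so it is \emph{not} true that ``carrying out the $\rho$-integration produces a factor controlled by the length $(t-s)$'' uniformly in $r$. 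The correct conclusion $\int_0^1\int_s^t |r-\rho|^{2H-2}\,d\rho\,dr\lesssim (t-s)$ does hold, but you get it either by swapping the order of integration (the $r$-integral over $[0,1]$ is bounded uniformly in $\rho$, and \emph{then} the outer $\rho$-integral contributes $(t-s)$) or, as the paper does, by recognizing the double integral as $\frac{1}{\alpha_H}\langle 1_{[s,t]},1_{[0,1]}\rangle_{\mathcal{H}}$ and estimating $b^{2H}-a^{2H}\lesssim b-a$ for $H>\frac12$. Second, your parenthetical that the kernel is ``bounded when $H=\tfrac12$'' is wrong: at $H=\tfrac12$ the exponent is $2H-2=-1$ and the prefactor $H(2H-1)$ vanishes, so the representation \eqref{eq:17} does not apply there at all; the case $H=\tfrac12$ must be handled separately via the identification $\mathcal{H}=L^2([0,1])$, exactly as the paper does, which makes part (1) immediate from the pathwise bound. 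With these two repairs your argument coincides with the paper's proof.
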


\begin{proof}
First note that for $s\in [0,\,1]$\
\[D_s(Y_u-Y_v)=D_s\int_v^u f(a, X_a) da =\int_v^u D_s f(a, X_a) da.\]
This can be seen by approximating the integrals using the fact that the integrands are bounded. 
Since the derivative operator is closed, it carries over to the limit.
The chain rule yields\
\[\int_v^u D_s f(a, X_a) da =\int_v^u \underbrace{f'(a, X_a)}_{|\dots|\leq M} \underbrace{D_s X_a}_{|\dots|\leq e^M} da,\]
which results in the bound\
\begin{equation}
\label{6s-2}
|D_s(Y_u-Y_v)|\lesssim (u-v).
\end{equation}
Now we will consider the case $H=\frac 1 2$, in which $B^H$ is the standard Brownian motion. 
Since the space $\mathcal{H}$ coincides with the space $ L^{2}(\Omega)$, we have 
\begin{eqnarray*}
&&\E [(|\langle D_\cdotp(Y_u-Y_v), \, 1_{[s,\,t]}(\cdotp)\rangle _{\mathcal{H}}|]=\E[|\langle D_\cdotp(Y_u-Y_v), \, 1_{[s,\,t]}(\cdotp)\rangle_{L^2}|]\\
&&\leq \E\left[\int_s^t |D_{\beta} (Y_u-Y_v)| d\beta \right]\lesssim (u-v)(t-s),
\end{eqnarray*}
applying in the last step the above bound (\ref{6s-2}) on $|D_{\beta} (Y_u-Y_v)|$.
Part (a) is proven for $H=\frac 1 2$. 
Part (b) is obtained similarly by the same arguments and bounds.

For the case $H> \frac 1 2$ we first calculate (recall that the norm in $\vert \mathcal{H}\vert $ is defined in (\ref{6s-3}))
\begin{eqnarray*}
\Vert D(Y_{u}-Y_{v}) \Vert _{\vert \mathcal{H} \vert } ^ {2} &=& \int_0^1 \int_0^1 |D_s(Y_u-Y_v)||D_r(Y_u-Y_v)||s-r|^{2H-2} ds dr\\
&\lesssim & (u-v)^2\underbrace{\int_0^1 \int_0^1 |s-r|^{2H-2} ds dr}_{=\frac{1}{\alpha_H}\langle 1_{[0,\,1]},\, 1_{[0,\,1]}\rangle_{\mathcal{H}}}<\infty,
\end{eqnarray*}
consequently, $D_s(Y_u-Y_v)\in |\mathcal{H}|$, which enables us to use the scalar product representation given in  \eqref{eq:17}. We get for the case (a), by (\ref{6s-2})\
\begin{eqnarray*}
\E[|\left\langle D_\cdotp(Y_u-Y_v),\, 1_{[s,\,t]}(\cdotp) \right\rangle_{\mathcal{H}}|]
&\leq \E\left[\alpha_H\int_0^1\int_0^1 |[D_{\beta}(Y_u-Y_v)| 1_{[s,\,t]}(\alpha) |\alpha-\beta|^{2H-2}d\beta d\alpha\right]\\
&=\alpha_H\int_s^t\int_0^1 \E[|D_{\beta}(Y_u-Y_v)|] |\alpha-\beta|^{2H-2}d\beta d\alpha \\
&\lesssim (u-v) \int_s^t\int_0^1 |\alpha-\beta|^{2H-2}d\beta d\alpha.
\end{eqnarray*}
Moreover, it holds that\
\begin{eqnarray*}
\int_s^t\int_0^1 |\alpha-\beta|^{2H-2}d\beta d\alpha & =& \frac{1}{\alpha_H}\langle 1_{[s,\,t]},\,1_{[0,\,1]}\rangle_{\mathcal{H}}\\
& = &\frac{1}{2}(t^{2H}-s^{2H})+\frac{1}{2} ((1-s)^{2H}-(1-t)^{2H})\lesssim (t-s),
\end{eqnarray*}
since for $0<a<b<1$ and $H>\frac{1}{2}$\
\[b^{2H}-a^{2H}=2H\int_0^{b-a}(x+a)^{2H-1}dx\leq 2H b^{2H-1} (b-a)\lesssim (b-a).\]
This proves part (a) of the lemma for $H>\frac 1 2$. The proof of part (b) follows analogously, since
\[\E[|D_{\beta_1}(Y_{u_1}-Y_{v_1})D_{\beta_2}(Y_{u_2}-Y_{v_2})|]\lesssim (u_1-v_1)(u_2-v_2).\]
\end{proof}

\subsection{Proof of Theorem~\ref{th:4}}\label{sec:proof2}

The correlation estimates of Lemma~\ref{lem:1} allow us to prove the theorem.

\begin{proof}
% [Proof of Theorem~\ref{bulk2}]
For both cases we consider the sum\
 $R_{n,\, 0}+R_{n,\,1},$
defined as in the proof of Theorem~\ref{th:3}. 
For the summand $R_{n,\,0}$ we get similarly to the previous theorem by the boundedness of $f$
\[\E[|R_{n,\,0}|] = Cn^{-1\slash 2}\Delta^{-2H} \sum_{i=0}^{n-p}\E[(\Delta_a Y_i)^2]\
\lesssim n^{-1\slash 2}\Delta^{-2H} n \Delta^2 = \Delta^{2-2H}n^{1\slash 2}=n^ {\alpha(2H-2)+\frac{1}{2}},\]
which converges to zero for $\alpha > \frac{1}{2(2-2H)}$.
% For $R_{n,\,1}$ recall the differences representation (\ref{14m-1})
%  \[\Delta_a X_i=\sum_{k=1}^{p-1} b_k (X_{(i+k)\Delta}-X_{(i+k+1)\Delta})\]
% and rewrite $R_{n,\,1}$ with its help:
We rewrite $R_{n,\,1}$ with the help of the differences representation \eqref{eq:5}.
Then
\begin{align}
R_{n,\,1}
&= Cn^{-1\slash 2}\Delta^{-2H}\sum_{j=0}^{n-p}(\Delta_a Y_j)(\Delta_a B^H_j)\nonumber \\
&=C n^{-1\slash 2}\Delta^{-2H}\sum_{j=0}^{n-p}(\Delta_a Y_j)(\sum_{i=1}^{p-1} b_i (B^H_{(i+j)\Delta}-B^H_{(i+j+1)\Delta}))\nonumber \\
&= -C n^{-1\slash 2}\Delta^{-2H}\sum_{j=0}^{n-p}\sum_{i=1}^{p-1}b_i(\Delta_a Y_j) \,  B^H(\bm 1_{[(i+j)\Delta,\,(i+j+1)\Delta]})\nonumber \\
&= C n^{-1\slash 2}\Delta^{-2H}\sum_{j=0}^{n-p}(\Delta_a Y_j) \,  B^H(h_j)\nonumber \\
&= C n^{-1\slash 2}\Delta^{-2H}\Big( \delta(u) + \sum_{j=0}^{n-p}    \langle D_\cdotp\Delta_a Y_j, \,  h_j \rangle_{\mathcal H}\Big)\  \label{6s-4}
% &= n^{-1\slash 2}\Delta^{-2H}\sum_{j=0}^{n-p}\sum_{i=0}^{p-1}b_i\left( \left\langle\bm 1_{[(i+j)\Delta,\,(i+j+1)\Delta]},\, D_\cdotp \Delta_a Y_j\right\rangle_{\mathcal{H}} - \delta\left(\bm 1_{[(i+j)\Delta,\,(i+j+1)\Delta]}\right)\cdot (\Delta_a Y_j) \right)\\
% &= n^{-1\slash 2}\Delta^{-2H}\sum_{j=0}^{n-p}\left( \left\langle u_j,\, D_\cdotp \Delta_a Y_i\right\rangle_{\mathcal{H}} - \delta\left(u_j \right)\cdot (\Delta_a Y_j) \right)\ .
% % &= R_{n,\,1,\,1} + R_{n,\,1,\,2} \ .
\end{align}
ehere $\delta$ denotes the Skorohod integral.  This decomposition follows directly from the integration-by-parts formula \eqref{eq:18} applied to $u=\sum_{j=0}^{n-p} F_jh_j$ for
\[
F_j=\Delta_a Y_j\in\mathbb{D}^{1,\,2}, \qquad h_j=-\sum_{i=0}^{p-1}b_i 1_{[(i+j)\Delta,\, (i+j+1)\Delta]}(\cdotp), \quad j\in \{0,\dots ,n-p\}\ .
\]
Notice that  (see (\ref{eq:5})) $h_j = D_\cdotp \Delta_a B^H_j$.
We estimate the first summand in (\ref{6s-4}). We can write
\begin{align*}
& \E \left[\left|\left\langle  D_\cdotp \Delta_a Y_j,\,h_j\right\rangle _{\mathcal{H}}\right|\right]  \\
&= \E \left[\left|\left\langle D_\cdotp \left(\sum_{k=0}^{p-1} b_k (Y_{(k+j)\Delta}-Y_{(k+j+1)\Delta})\right),\,\sum_{i=0}^{p-1}\left(b_i 1_{[(i+j)\Delta,\,(i+j+1)\Delta]}(\cdotp)\right)\right\rangle _{\mathcal{H}}\right|\right]\\
& \leq \sum_{i,k =0}^{p-1}|b_i b_k| \cdot \E \left[\left|\left\langle  D_\cdotp (Y_{(k+j)\Delta}-Y_{(k+j+1)\Delta}),\,1_{[(i+j)\Delta,\,(i+j+1)\Delta]}(\cdotp)\right\rangle _{\mathcal{H}}\right|\right]\\
& \lesssim p^2\max_{0\leq k \leq p-1}\{ b_k^2\}\, \Delta^2 
\lesssim \Delta^2
\end{align*}
for any $j\in {0,\dots , n-p}$, where the last inequality follows from Lemma \ref{lem:1}, point 1. 
Hence we obtain
\begin{equation}
\E[|n^{-1\slash 2}\Delta^{-2H} \sum_{j=0}^{n-p}\langle D_\cdotp \Delta_a Y_j,\, h_j\rangle_{\mathcal{H}} | ]\lesssim  n^{1\slash 2}\Delta^{2-2H},
\label{eq:11}
\end{equation}
which goes to zero if $\alpha>\frac{1}{4-4H}$.
Let us turn to the first component $n^{-1\slash 2}\Delta^{-2H} \delta(u)$ and show that it vanishes in $L^2$ (implying convergence in probability).
% \begin{small}
\begin{equation}\label{eq:12}
\begin{aligned}
 &\E\bigl[ | \delta(u) |^2 \bigr] 
 =\E\bigl[ | \sum_{j=0}^{n-p} \delta((\Delta_aY_j)h_j) |^2 \bigr] 
 =\sum_{j,k=0}^{n-p} \E\bigl[ \delta((\Delta_aY_j)h_j) \cdot \delta((\Delta_aY_k)h_k) \bigr] \\
 &=\sum_{j,k=0}^{n-p} \Bigg( \E\bigl[ \bigl\langle (\Delta_aY_j)h_j,\, (\Delta_aY_k)h_k\bigr\rangle_{\mathcal H}\bigr] 
 + \E\bigl[ \bigl\langle D_\cdotp \Delta_aY_j,h_k \bigr\rangle_{\mathcal H}  \bigl\langle D_\cdotp\Delta_a Y_k,\, h_j\bigr\rangle_{\mathcal H}\bigr] \Bigg) \ ,
\end{aligned}
\end{equation}
which follows from a direct application of 
\eqref{eq:19} .
An expansion of the differences representation \eqref{eq:5} for $\Delta_aY_j, \Delta_aY_k$ and the definition of $h_j,h_k$ gives 
\begin{align*}
 &\E\bigl[ \bigl\langle D_\cdotp \Delta_aY_j,h_k \bigr\rangle_{\mathcal H}  \bigl\langle D_\cdotp\Delta_a Y_k,\, h_j\bigr\rangle_{\mathcal H}\bigr] \\
 &=\sum_{i,\,l,\,\mu,\,\nu=0}^{p-1}b_i b_l b_\mu b_\nu 
\cdot \E \Big[  \bigl\langle D_\cdotp (Y_{(i+j+1)\Delta}-Y_{(i+j)\Delta}),\, 1_{[(\nu+k)\Delta,\, (\nu+k+1)\Delta]}(\cdot)\bigr\rangle_{\mathcal{H}}\\
& \hspace{10em}\qquad \cdot \bigl\langle D_\cdotp (Y_{(l+k+1)\Delta}-Y_{(l+k)\Delta}),\, 1_{[(\mu+j)\Delta,\, (\mu+j+1)\Delta]}(\cdot)\bigr\rangle_{\mathcal{H}}\Big]\ .
\end{align*}
With the use of Lemma~\ref{lem:1} applied to each expectation this is easily dominated by
\begin{align*}
 \bigl| \E\bigl[ \bigl\langle D_\cdotp \Delta_aY_j,h_k \bigr\rangle_{\mathcal H}  \bigl\langle D_\cdotp\Delta_a Y_k,\, h_j\bigr\rangle_{\mathcal H}\bigr] \bigr|
%  \lesssim \Delta^4 \sum_{i,\,l,\,m,\,n=0}^{p-1}b_i b_l b_m b_n 
 \lesssim p^4 \max_{0\leq k \leq p-1}\{ b_k^4\}\, \Delta^4 
 \lesssim \Delta^4\ .
 \end{align*}
Analogously we obtain
\begin{equation}
\label{eq:13}
\begin{aligned}
 &\bigl|\E\bigl[ \bigl\langle (\Delta_aY_j)h_j,\, (\Delta_aY_k)h_k\bigr\rangle_{\mathcal H}\bigr]\bigr| \\
&=\sum_{i,\,l,\,\mu,\,\nu=0}^{p-1} \bigl| b_i b_l b_\mu b_\nu 
\cdot \E \Big[  (Y_{(i+j+1)\Delta}-Y_{(i+j)\Delta})(Y_{(l+k+1)\Delta}-Y_{(l+k)\Delta}) \Big]\\
& \hspace{10em}\qquad \cdot \langle 1_{[(\mu+j)\Delta,\, (\mu+j+1)\Delta]}(\cdot),\, 1_{[(\nu+k)\Delta,\, (\nu+k+1)\Delta]}(\cdot)\rangle_{\mathcal{H}} \bigr| \\
&\le\max_{0\leq k \leq p-1}\{ b_k^4\} M^2\Delta^2\cdot \sum_{i,\,l,\,\mu,\,\nu=0}^{p-1}  
\bigl| \langle 1_{[(\mu+j)\Delta,\, (\mu+j+1)\Delta]}(\cdot),\, 1_{[(\nu+k)\Delta,\, (\nu+k+1)\Delta]}(\cdot)\rangle_{\mathcal{H}} \bigr| \ ,
\end{aligned}
\end{equation}
where we used the fact that the expectations are again bounded by $M^2\Delta^2$.

Observe that for fixed $i$, $l$, $\mu$, $\nu$ we will make the sum in \eqref{eq:12} larger if we estimate
\begin{equation}\label{eq:14}
\begin{aligned}
\sum_{j,k=0}^{n-p} & \E\bigl[ \bigl|\bigl\langle (\Delta_aY_j)h_j,\, (\Delta_aY_k)h_k\bigr\rangle_{\mathcal H}\bigr|\bigr] \\
&\lesssim \Delta^2\sum_{j,\,k=0}^{n-p+\eta} \E \bigl[ \bigl|\bigl\langle 1_{[j\Delta,\, (j+1)\Delta]}(\cdotp),\, 1_{[k\Delta,\, (k+1)\Delta]}(\cdotp)\bigr\rangle_{\mathcal{H}}\bigr|\bigr],
\end{aligned}
\end{equation}
where $\eta = \max (\mu,\,\nu)$.

Now the inner product satisfies
\[
\langle 1_{[i\Delta,\, (i+1)\Delta]}(\cdot),\, 1_{[j\Delta,\, (j+1)\Delta]}(\cdot)\rangle_{\mathcal{H}} 
=\Delta^{2H}\cdotp \frac{1}{2}\left(|i-j+1|^{2H}+|i-j-1|^{2H}-2|i-j|^{2H}\right)\ .
\]
We note that for $H=\frac{1}{2}$ the indicator functions are orthogonal. On the diagonal we have for $i=j$
\[
\langle 1_{[i\Delta,\, (i+1)\Delta]}(\cdot),\, 1_{[j\Delta,\, (j+1)\Delta]}(\cdot)\rangle_{\mathcal{H}} 
=\Delta^{2H}\ .
\]

For $H>\frac{1}{2}$ also the off-diagonal elements contribute. For $r:=i-j$ large we have $(r+1)^{2H}-2r^{2H}+(r-1)^{2H} \sim 2H(2H-1) r^{2H-2}$ since it uniformly approximates the second derivative of the function $x\mapsto x^{2H}$ at $x=r$.
Indeed, a Taylor expansion shows that for $r> 1$ 
\[
(r+1)^{2H}+(r-1)^{2H} - 2r^{2H} = 2H(2H-1)r^{2H-2} + O\left( (r-1)^{2H-2} \right)\ .
\]
We can now rearrange the off-diagonal part of the sum in \eqref{eq:14} and use this approximation:
\begin{align*}
\Delta^2&\sum_{\substack{j,\,k=0\\j\neq k}}^{n-p+\eta} \E \bigl[ \bigl|\bigl\langle 1_{[t_{j},\, t_{j+1}]}(\cdotp),\, 1_{[t_{k},\, t_{k+1}]}(\cdotp)\bigr\rangle_{\mathcal{H}}\bigr|\bigr]\\
=&\Delta^{2+2H} \frac{1}{2}2\sum_{r=1}^{n-p+\eta} \left(\left(n-p+\eta\right)+1-r\right)\left((r+1)^{2H}-2r^{2H}+(r-1)^{2H}\right)\\
\lesssim & \Delta^{2+2H}  \sum_{r=1}^{n-p+\eta} \left(\left(n-p+\eta\right)+1-r\right) r^{2H-2}.
\end{align*}
Recall now that for the sum $\sum_{k=1}^n \frac{1}{k^s}$ with $n\in\N$ and $s\in \R$ the following asymptotic equality stems from the Euler--Maclaurin formula. 
\[ \sum_{k=1}^n \frac{1}{k^s}\sim \zeta(s)-\frac{n^{1-s}}{s-1}\left(1+O\left(\frac{1}{n}\right)\right)\ , \] 
where $\zeta(s)$ is the Riemann zeta function. Hence,
\begin{align*}
\sum_{r=1}^{n-p+\eta} &\left(\left(n-p+\eta\right)+1-r\right) r^{2H-2}\\
& \lesssim n \sum_{r=1}^{n-p+\eta}  r^{2H-2}
 \lesssim n+n\cdotp n^{1-(2-2H)}\lesssim n^{2H}.
\end{align*}
As a final step we verify that correctly scaled we have for the second summand in (\ref{6s-4})
\begin{align*}
 &\E\bigl[ | n^{-1\slash 2}\Delta^{-2H} \delta(u) |^2 \bigr] \\
 &= n^{-1}\Delta^{-4H} \sum_{j,k=0}^{n-p} \Bigg( \E\bigl[ \bigl\langle (\Delta_aY_j)h_j,\, (\Delta_aY_k)h_k\bigr\rangle_{\mathcal H}\bigr] 
 + \E\bigl[ \bigl\langle D_\cdotp \Delta_aY_j,h_k \bigr\rangle_{\mathcal H}  \bigl\langle D_\cdotp\Delta_a Y_k,\, h_j\bigr\rangle_{\mathcal H}\bigr] \Bigg) \\
 &\lesssim  n^{-1}\Delta^{-4H} \Bigg(  \Delta^{2H+2}n^{2H} +  n^2 \Delta^4 \Bigg)
 = n \Delta^{4-4H}+ n^{2H-1}\Delta^{2-2H} \ .
\end{align*}
Substituting $\Delta=n^{-\alpha}$ gives us the condition $\alpha>\frac{1}{4-4H}$ as above for the vanishing of the first term and the aditional condition
$\alpha>\frac{2H-1}{2-2H}$ for the second. This bound combined with (\ref{eq:11}) and (\ref{6s-4}) will imply the conclusion.
\end{proof}

% \begin{remark}
% Write for $n\in\mathbb N$ and $s>0$
% \begin{align}
% \zeta_n(s) = \sum_{k=1}^n \frac{1}{k^s} \ ,\qquad
% \eta_n(s) = \sum_{k=1}^n \frac{(-1)^{k-1}}{k^s} \ .
% \end{align}
% Then $\zeta_n$ converges to the \emph{Riemann-$\zeta$-function} for $s>1$ and $\eta_n$ converge to the \emph{Dirichlet-$\eta$-function} for any $s>0$.
% The two functions share the relation
% \begin{equation*}
% \eta(s) = (1-2^{1-s})\zeta(s) \quad \text{ for } s\ne 1\ .
% \end{equation*}
% Indeed we see that
% \begin{align*}
% \zeta_{2n}(s) - \eta_{2n}(s) 
% &= 2\sum_{\substack{k\le 2n\\ k \text{ even}}} \frac{1}{k^s} 
% = 2\sum_{k=1}^n \frac{1}{(2k)^s} 
% = 2^{1-s} \sum_{k=1}^n \frac{1}{k^s} 
% = 2^{1-s} \zeta_{n}(s)\ .
% \end{align*}
% Obviously this is
% \begin{align*}
% \eta_{2n}(s) 
% &= \zeta_{2n}(s) - 2^{1-s} \zeta_{n}(s) 
% = (1-2^{1-s}) \zeta_{n}(s) + \sum_{k=n+1}^{2n} \frac{1}{k^s} 
% \ ,
% \end{align*}
% and thus
% \begin{align*}
% \zeta_{n}(s)  
% &= \frac{1}{1-2^{1-s}} \left( \eta_{2n}(s) - n^{-s} \sum_{k=n+1}^{2n} (n/k)^s  \right)\\
% &= \frac{1}{1-2^{1-s}} \left( \eta_{2n}(s) + n^{1-s} O(1) \right)\\
% \ .
% \end{align*}
% Since similarly $\eta_n(s) = \eta(s) + O(n^{-s})$ we see that 
% \begin{equation*}
% \zeta_n(s) = \frac{1}{1-2^{1-s}} \eta(s) + O(n^{1-s}) =  \zeta(s) + O(n^{1-s})\ .
% \end{equation*}
% We have thus identified the ``function $h$'' in our proof as being the \emph{Riemann-$\zeta$-function}.
% \end{remark}

\section{Estimation of the Hurst parameter}\label{sec:estim}

Here we will construct estimators for the Hurst parameter $H$ of the Brownian motion $B^H$ driving the SDE (S2) and derive their properties, using the previous results. In particular, we will transfer the almost sure convergence result from Theorem~\ref{th:1} to the quadratic $a$-variation of the solution of the SDE (S2) and apply the central limit theorems from the previous section. 

We shall fix in the sequel a filter  $a=(a_0,\dots , a_p)$ is a filter of order $M(a)$. \\

For a stochastic process $(Z_t)_{t\in [0,\,1]}$ we define
\[U(a,\,n,\,\Delta ,Z):=\frac{1}{n}\sum_{j=1}^{n-p} (\Delta_a Z_j)^2\]
with $n$ such that $n-p >0$ and $\Delta = n^{-\alpha}$ for some $\alpha > 0$.
Then
\[V(a,\,n,\,\Delta ,B^H)=\frac{U(a,\,n,\,\Delta ,B^H)}{\sigma_{a,\, \Delta}}-\frac{n-p}{n}\]
holds, and we deduce from Theorem~\ref{th:1} that\
\[\frac{U(a,\,n,\,\Delta ,B^H)}{\sigma_{a,\, \Delta}} -1\stackrel{\text{a.s.}}{\to}0.\]
The following theorem shows the same result for the solution $X$ of (S1).\\

\begin{theorem}\label{th:5}
Let $(B^H_t)_{t\in [0,\,1]}$ be a fractional Brownian motion with Hurst parameter $H\in (0,1)$ and let $(X_t)_{t\in [0,\,1]}$ be the solution of the SDE (S2) driven by the fBm  $B^H$. Then\
\[\frac{U(a,\,n,\,\Delta ,X)}{\sigma_{a,\, \Delta}} -1\stackrel{\text{a.s.}}{\to}0.\]
\end{theorem}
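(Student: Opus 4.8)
The plan is to mirror the decomposition used in the proofs of Theorems~\ref{th:3} and \ref{th:4}, but now to extract \emph{almost sure} rather than distributional information. Writing $X_t = x + Y_t + B^H_t$ and using that the filter $a$ has zero sum, the initial value $x$ drops out, so that $\Delta_a X_j = \Delta_a Y_j + \Delta_a B^H_j$. Expanding the square gives
\[
(\Delta_a X_j)^2 = (\Delta_a B^H_j)^2 + (\Delta_a Y_j)^2 + 2(\Delta_a Y_j)(\Delta_a B^H_j),
\]
and hence
\[
\frac{U(a,\,n,\,\Delta ,X)}{\sigma_{a,\,\Delta}} - 1
= \left(\frac{U(a,\,n,\,\Delta ,B^H)}{\sigma_{a,\,\Delta}} - 1\right)
+ \frac{1}{n\sigma_{a,\,\Delta}}\sum_{j=1}^{n-p}(\Delta_a Y_j)^2
+ \frac{2}{n\sigma_{a,\,\Delta}}\sum_{j=1}^{n-p}(\Delta_a Y_j)(\Delta_a B^H_j).
\]
The first term on the right converges to zero almost surely: this is precisely part~1 of Theorem~\ref{th:1}, transferred to $U/\sigma_{a,\,\Delta}-1$ as recorded just before the statement, and it holds for any filter with no restriction on $M(a)$. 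It therefore remains to show that the two remainder sums vanish almost surely.

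For the pure-drift term I would exploit that, because $f$ is bounded by $M$ in \eqref{eq:9}, the bound $|\Delta_a Y_j|\lesssim \Delta$ from \eqref{6s-1} holds \emph{pathwise}, for every $\omega$, and that $\sigma_{a,\,\Delta}$ equals a strictly positive constant times $\Delta^{2H}$ by \eqref{dah}. Since there are $n-p$ summands, this yields
\[
\frac{1}{n\sigma_{a,\,\Delta}}\sum_{j=1}^{n-p}(\Delta_a Y_j)^2 \lesssim \frac{n\,\Delta^2}{n\,\Delta^{2H}} = \Delta^{2-2H} = n^{-\alpha(2-2H)},
\]
which tends to zero \emph{deterministically}, for any $\alpha>0$, because $H<1$; no exceptional null set is involved here.

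The cross term is where the genuine work lies, and is the step I expect to be the main obstacle, since $\Delta_a B^H_j$ is random and its partial sums must be controlled along almost every path. I would apply the Cauchy--Schwarz inequality,
\[
\left|\frac{2}{n\sigma_{a,\,\Delta}}\sum_{j=1}^{n-p}(\Delta_a Y_j)(\Delta_a B^H_j)\right|
\leq \frac{2}{n\sigma_{a,\,\Delta}}\Big(\sum_{j=1}^{n-p}(\Delta_a Y_j)^2\Big)^{1/2}\Big(\sum_{j=1}^{n-p}(\Delta_a B^H_j)^2\Big)^{1/2},
\]
and then feed in two facts. First, $\sum_j (\Delta_a Y_j)^2\lesssim n\Delta^2$ pathwise, as above. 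Second, since $U(a,\,n,\,\Delta ,B^H)/\sigma_{a,\,\Delta}\to 1$ almost surely, the quantity $\sum_j(\Delta_a B^H_j)^2 = n\,\sigma_{a,\,\Delta}\cdot\big(U(a,\,n,\,\Delta ,B^H)/\sigma_{a,\,\Delta}\big)$ is bounded by $C(\omega)\,n\,\sigma_{a,\,\Delta}$ for an almost surely finite random constant $C(\omega)$. Inserting both bounds collapses the expression to a constant multiple of
\[
\frac{(n\Delta^2)^{1/2}(n\sigma_{a,\,\Delta})^{1/2}}{n\sigma_{a,\,\Delta}} = \frac{\Delta}{\sqrt{\sigma_{a,\,\Delta}}}\lesssim \Delta^{1-H}=n^{-\alpha(1-H)},
\]
which converges to zero almost surely for any $\alpha>0$ since $H<1$. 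Combining the three contributions gives the claim, and, in the spirit of Remark~\ref{rem:1}, the argument imposes no lower bound on $\alpha$ beyond positivity and no condition on the order $M(a)$ of the filter.
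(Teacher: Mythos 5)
Your proof is correct, and it follows the paper's own decomposition exactly for two of the three terms: the same expansion of $(\Delta_a X_j)^2$, the same reduction of the first term to the almost sure convergence of $U(a,n,\Delta,B^H)/\sigma_{a,\Delta}$ to $1$ (Theorem~\ref{th:1}, part~1), and the same pathwise bound $|\Delta_a Y_j|\lesssim \Delta$ from the boundedness of $f$ for the pure-drift term. Where you genuinely diverge is the cross term. The paper controls $\Delta_a B^H_j$ pathwise via the $(H-\varepsilon)$-H\"older continuity of fBm sample paths (with a trajectory-dependent constant), obtaining the bound $\Delta^{1-H-\varepsilon}$ and then choosing $\varepsilon$ small. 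You instead apply Cauchy--Schwarz and reuse the already established almost sure convergence $U(a,n,\Delta,B^H)/\sigma_{a,\Delta}\to 1$ to bound $\sum_j(\Delta_a B^H_j)^2$ by $C(\omega)\,n\,\sigma_{a,\Delta}$ along almost every path, arriving at the slightly sharper rate $\Delta^{1-H}$. Your route is self-contained in the sense that it needs no external regularity input beyond Theorem~\ref{th:1} itself, whereas the paper imports the Kolmogorov/H\"older regularity of fBm; both are valid, both impose no condition on $M(a)$ and only $\alpha>0$, and the conclusion is the same.
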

\begin{proof}
We have\
\[U(a,\,n,\,\Delta ,X)=U(a,\,n,\,\Delta ,B^H)+U(a,\,n,\,\Delta ,Y)+2\frac{1}{n}\sum_{j=1}^{n-p} (\Delta_a Y_j)(\Delta_a B^H_j),\]
so it is enough to show that the last two summands divided by $\sigma_{a,\, \Delta}$ ($\sim \Delta^{2H}$) converge to zero almost surely. Recall that we have from (\ref{6s-1})  $|\Delta_a Y_j|\lesssim \Delta$. It follows that\
\[\left|\frac{U(a,\,n,\,\Delta ,Y)}{\sigma_{a,\, \Delta}}\right|\stackrel{\text{a.s.}}{\lesssim}\Delta^{-2H} n^{-1}n\Delta^2=\Delta^{2-2H},\]
which goes to $0$ for every $\alpha$ as $n$ tends to infinity.\\

To estimate $\Delta_a B^H_j$ we refer to \cite{Nua} for the fact that almost all sample paths of a fractional Brownian motion with the Hurst index $H$ are H\"older continuous of order $H-\varepsilon$ for any $\varepsilon > 0$ (it follows from the Kolmogorov's continuity theorem, since $B^H$ is self-similar). We get for almost every trajectory, using the differences representation \eqref{eq:5}:\
\[|\Delta_a B^H_j|\leq \sum_{i=0}^{p-1}|b_i||B_{(i+j+1)\Delta}-B_{(i+j)\Delta}|\leq \sum_{i=0}^{p-1}|b_i| C \Delta^{H-\varepsilon}\lesssim \Delta^{H-\varepsilon}.\]
Note that the constant $C$ can vary depending on the trajectory. Then we get\
\[\left|\frac{1}{n\sigma_{a,\, \Delta}}\sum_{j=1}^{n-p} (\Delta_a Y_j)(\Delta_a B^H_j)\right|\stackrel{\text{a.s.}}{\lesssim} n^{-1}\Delta^{-2H}n\Delta \Delta^{H-\varepsilon}=\Delta^{1-H-\varepsilon}.\]
We can ensure that it converges to zero by setting $\varepsilon := \min \left(\frac{1-H}{2},\,\frac{H}{2}\right)$, and the claim follows.\
\end{proof}

Note that for the special case $a=(-1,\,1)$ and $\Delta = n^{-1}$ we can deduce directly from this result that the ''standard'' estimator $\hat{H}$ given by
\[\hat{H}:=\frac{\log (n U(a,\,n,\,\Delta,\,X))}{-2\log (n)}+\frac{1}{2}\]
converges almost surely to $H$.

\begin{remark}\label{rem:2}
Since the relationship between $V(a,\,n,\,\Delta ,X)$ and $U(a,\,n,\,\Delta ,X)$ is the same as for the process $B^H$, it follows from Theorem~\ref{th:5} that $V(a,\,n,\,\Delta ,X)\stackrel{\text{a.s.}}{\to}0$.\
\end{remark}
Now we are in a position to construct an estimator $\hat{H}_1$ in the way it was done in \cite{ILa} (see also \cite{Coeur}) for processes with stationary increments. We know from Theorem~\ref{th:5} that $U(a,\,n,\,\Delta ,X)$ asymptotically equals $\sigma_{a,\,\Delta}$, and a simple rearrangement argument gives us\
\[\sigma_{a,\,\Delta}=-\frac{1}{2}\Delta^{2H}\sum_{k,\,l =0}^{p}a_k a_l |k-l|^{2H}=-\sum_{d=1}^{p} (\Delta d)^{2H}\sum_{k=0}^{p-d}a_k a_{k+d}.\]

Let us consider for $i\in \{1,\dots ,m\}$ a family of filters $a^{(i)}=(a_0^{(i)},\dots , a_{p_i}^{(i)})$.
For each member the associated variance $\sigma_{a,\Delta}$ is a linear combination of the $ (\Delta d)^{2H}$, $d\in \{1,\dots ,P\}$, where $P=\max_i p_i$, the numbers $p_i$ being not necessarily different. So if we consider a matrix $A=(A_{ij})\in \R^{m\times P}$ defined by\
\[A_{ij}=-\sum_{k=0}^{p_j-j}a^{(i)}_k a^{(i)}_{k+j}\text{ for }j\leq p_j\text{ and }A_{ij}=0\text{ otherwise},\]
then $U_n:=(U(a^{(i)},\,n,\,\Delta ,X))_{i=1,\dots ,m}$ tends almost surely to $AD$, where $D$ denotes the vector $((\Delta d)^{2H})_{d=1,\dots ,P}$.\\ 
We choose the $a^{(i)}$ such that $A$ is of full rank and estimate $D$ by linear regression (preserving the almost sure convergence property): $\hat{D}=(A^TA)^{-1} A^T U_n$. Now we can consider $\log |\hat{D}|$, which tends to $2H (\log (d \Delta))_{d=1,\dots ,P}$, and estimate $H$ by another simple linear regression:\
\[\hat{H}_1=\frac{\sum\limits_{d=1}^P \log |\hat{D}_d|\log(d\Delta)}{2\sum\limits_{d=1}^P \log(d\Delta)^2 }.\]
If we know the observations to be equidistant but do not have access to the actual mesh size $\Delta$, we can estimate $D$ in the same way (because $U_n$ does not directly depend on $\Delta$) and then perform a regression with the intercept term $2H \log (\Delta)$. This allows the estimation\
\[\hat{H}_2=\frac{\sum\limits_{d=1}^P \log |\hat{D}_d|\log(d)-\frac{1}{P}\sum\limits_{d=1}^P \log |\hat{D}_d|\sum\limits_{d=1}^P \log(d)}{2 \left(\sum\limits_{d=1}^P \log(d)^2-\frac{1}{P}\left(\sum\limits_{d=1}^P \log(d)\right)^2\right)}.\]
To establish some properties of the estimators $\hat{H}_1$ and $\hat{H}_2$, we first recall that, as a consequence of Theorem~\ref{th:5}, we have the convergence result $U_n\stackrel{\text{a.s.}}{\to}AD$ and therefore\
\[\hat{D}=(A^TA)^{-1} A^T U_n\stackrel{\text{a.s.}}{\to} D.\]
Then, evoking the continuous mapping theorem, we obtain $\hat{H}_1\stackrel{\text{a.s.}}{\to}H$, $\hat{H}_2\stackrel{\text{a.s.}}{\to} H$. 
In other words, both estimators are strongly consistent.

The question of asymptotic normality depends upon whether Theorems \ref{th:3} and \ref{th:4} are applicable. 
Choosing filters of order 2 one can ensure that the filter condition is always satisfied, but other assumptions require more care. In particular, if there is no prior knowledge about the true value of $H$, one has to assume that the mesh size condition $\alpha > \frac{1}{4-4H}$ is satisfied. 
If a certain bound for $H$ is known, say, if $H\in \left(0,\,\frac{3}{4}\right)$, then under the assumption that $\alpha \geq 1$ (which includes the usual partition of $[0,\,1]$) we obtain asymptotic normality. The following theorem summarizes our observations.

\begin{theorem}
Let $X$ be a solution of the SDE (S2) and let the mesh size $\Delta$ be chosen such that the assumptions of Theorems \ref{th:3} and \ref{th:4} are satisfied. Moreover, let $a^{(i)}$ be filters of respective lengths $p^{(i)}+1$ (for $i=1,\dots , m$) such that $M(a^{(i)})> H+ \frac{1}{4}$. Then the sequences $\sqrt{n} (\hat{H}_1-H)$ and $\sqrt{n} (\hat{H}_2-H)$ converge weakly to normally distributed, centered random variables.
\end{theorem}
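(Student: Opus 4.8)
The plan is to deduce the asymptotic normality of $\hat H_1$ and $\hat H_2$ from the multivariate central limit theorem for the quadratic $a$-variations, and then to carry this Gaussian behaviour through the purely deterministic operations---the linear regression on $U_n$ and the logarithm---that define the two estimators.

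First I would invoke Remark~\ref{rem:1}, i.e.\ the multivariate versions of Theorems~\ref{th:3} and~\ref{th:4}, to obtain that $\sqrt n\,\bigl(V(a^{(i)},n,\Delta,X)\bigr)_{i=1,\dots,m}$ converges in distribution to a centered Gaussian vector. Writing $V(a^{(i)},n,\Delta,X)=\frac{U(a^{(i)},n,\Delta,X)}{\sigma_{a^{(i)},\Delta}}-\frac{n-p_i}{n}$ and discarding the deterministic term $\frac{p_i}{n}=o(n^{-1/2})$ by Slutsky's lemma, this yields a joint CLT for the relative errors $\sqrt n\,\bigl(\frac{U(a^{(i)},n,\Delta,X)}{\sigma_{a^{(i)},\Delta}}-1\bigr)_{i}$.

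The decisive step is to transfer this to $\hat D$. Since $\hat D-D=(A^{T}A)^{-1}A^{T}(U_n-AD)$ is a fixed linear image of $U_n-AD$, and since $\sigma_{a^{(i)},\Delta}=(AD)_i$, I would express each $\sqrt n\,(\hat D_d/D_d-1)$ as a linear combination of the relative errors above. The point on which everything rests is that $\sigma_{a^{(i)},\Delta}$ and $D_d=(\Delta d)^{2H}$ share the common factor $\Delta^{2H}$, which cancels in every ratio $\sigma_{a^{(i)},\Delta}/D_d$; hence the coefficients of this combination are constants independent of $n$, so that $\sqrt n\,(\hat D_d/D_d-1)_{d}$ converges to a centered Gaussian vector with an explicit covariance matrix $\Gamma$. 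Because $\hat D_d/D_d\to 1$ almost surely and $x\mapsto\log|x|$ is smooth with derivative $1$ at $x=1$, the delta method then gives the joint convergence $\sqrt n\,\bigl(\log|\hat D_d|-\log D_d\bigr)_{d}\stackrel{(d)}{\to}N(0,\Gamma)$.

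Finally, both estimators are affine regressions in $(\log|\hat D_d|)_d$ that return exactly $H$ when evaluated on the true relation $\log D_d=2H\log(d\Delta)$; by linearity of the regression operator, $2(\hat H_j-H)$ is then a linear functional of the vector $\bigl(\log|\hat D_d|-\log D_d\bigr)_{d}$. For $\hat H_2$ the regression weights $w$ depend only on $\log d$ and are therefore $n$-independent, so the continuous mapping theorem immediately produces a genuine centered normal limit $N(0,w^{T}\Gamma w)$. For $\hat H_1$ the covariate is $\log(d\Delta)=\log d+\log\Delta$, which diverges with $n$; dividing numerator and denominator of $\sqrt n(\hat H_1-H)$ by $\log\Delta$, one finds that the limit is again a centered Gaussian (here the diverging covariate forces the limiting variance to vanish, so the convergence is in fact faster than $\sqrt n$, a degenerate instance of the asserted normality). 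I expect the main obstacle to be exactly the middle step: since $D_d\to 0$ one cannot apply a central limit theorem to $\hat D-D$ under a fixed normalization, and the whole argument depends on passing to the relative errors $\hat D_d/D_d$ and verifying that the powers of $\Delta$ cancel so that the limiting covariance is a genuine constant matrix.
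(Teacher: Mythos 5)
Your proposal is correct and follows the same route as the paper, whose proof is a two-line invocation of the delta method applied to the vector $V_n=(V(a^{(i)},n,\Delta,X))_i$, citing Remark~\ref{rem:1} for the joint CLT and noting that $\hat H_1,\hat H_2$ arise from $V_n$ by ``linear and logarithmic transformations.'' You have, however, made explicit two points that the paper's proof leaves implicit and that are genuinely needed: first, that since $U_n$ and $AD$ both tend to $0$, the CLT must be carried through the \emph{relative} errors $\hat D_d/D_d-1$, which works precisely because the factor $\Delta^{2H}$ cancels in every ratio $\sigma_{a^{(i)},\Delta}/D_d$, leaving $n$-independent coefficients; and second, that the regression coefficients in the two estimators behave differently. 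Your observation about $\hat H_1$ is a real refinement: because the covariates $\log(d\Delta)=\log d-\alpha\log n$ diverge, one gets $\sqrt n\,(\hat H_1-H)=O_P(1/\log n)\to 0$, so the limit is the degenerate law $N(0,0)$ rather than a nondegenerate Gaussian --- the theorem remains literally true but the paper's proof does not distinguish this case from the genuinely nondegenerate limit obtained for $\hat H_2$ (which, as your argument implicitly requires, needs $P\ge 2$ so that the centered covariates $\log d-\overline{\log d}$ are not all zero).
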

\begin{proof}
This is an application of the delta method (mentioned, for example, in \cite{DCD}) for the vector $V_n:=(V(a^{(i)},\,n,\,\Delta ,X))_{i=1,\dots ,m}$. Its almost sure convergence was shown in Remark \ref{rem:2} and the multivariate convergence in law of $\sqrt{n}V_n$ follows due to Remark \ref{rem:1}. Since for the construction of $\hat{H}_1$, $\hat{H}_2$ it underwent only linear and logarithmic transformations, the obtained estimators are indeed asymptotically normal.
\end{proof}
%
%Moreover, for $\alpha\geq 1$ $\hat{H}_1$ and $\hat{H}_2$ are asymptotically normal. In order to demonstrate this, we quote a theorem from \cite{DCD}:\
%\begin{theorem}
%Let $V_n$ be a sequence of $k$-dimensional random vectors. Assume\
%\[V_n\stackrel{\PP}{\to}z\text{ and }\sqrt{n} (V_n - z)\to ^{(d) } N (0,\,\sigma),\]
%where $z$ is a real number. Let $g\in C^2(O,\,\R^{l})$ (where $O$ is a neighbourhood of $z$) be such that its second order partial derivatives are bounded on $O$. Moreover, denote by $J_g$ its Jacobian matrix. Then\
%\[\sqrt{n}(g(V_n)-g(z))\to {(d)}N (0,\,J_g(z)\sigma (J_g(z))^T).\]
%\end{theorem}
%
%The claim now follows from the fact that the vector $V_n:=(V(a^{(i)},\,n,\,\Delta ,X))_{i=1,\dots ,m}$ underwent none but linear (and one logarithmic) transformations for the construction of $\hat{H}_1$ and $\hat{H}_2$ as well as the observation that the Theorems \ref{bulk1} and \ref{bulk2} can be generalised to a joint convergence statement similarly to \cite{ILa}. The first assumption of the theorem was shown for this vector in Remark \ref{thestr}.\\

\section{Simulation study}\label{sec:simul}
Similarly to \cite{ILa} and \cite{Coeur} two filters $(a^{(1)}_i)_{i\in \{0,\dots ,p \}}$ and $(a^{(2)}_i)_{i\in \{0,\dots ,2p \}}$ are considered, where $a^{(2)}$ is obtained by ''thinning'' the filter $a^{(1)}$ (i.e., $a^{(2)}_{2k}:= a^{(1)}_k$ for $k\in \{0,\dots ,p \} $ and zero otherwise). In this case the estimator $\hat{H}_1$ simplifies to\
\[\hat{H}_1 =\frac{1}{2}\log_2 \left(\frac{U(a^{(2)}, n, \Delta , X)}{U(a^{(1)}, n, \Delta , X)} \right). \]
Note that this estimator is independent of the scaling: its strong consistency has been shown for all $\alpha >0$, hence, it is enough to know that the observations are equidistant, and the mesh size $\Delta$ can then be chosen appropriately. It is, moreover, by construction independent of deterministic multiplicative scaling factors of the fBm involved, which allows us to include the (slightly more general) case
\begin{equation}\label{eq:15}
X_t = x + \int_0^t f(s, X_s) ds + \sigma B^H_t
\end{equation}
with some unknown $\sigma >0$ and observed over an unknown interval in our simulation study.
From these observations different settings for the simulations arise.

\begin{figure}[h]
\label{fig:paths}
\centering
\includegraphics[width=0.3\textwidth]{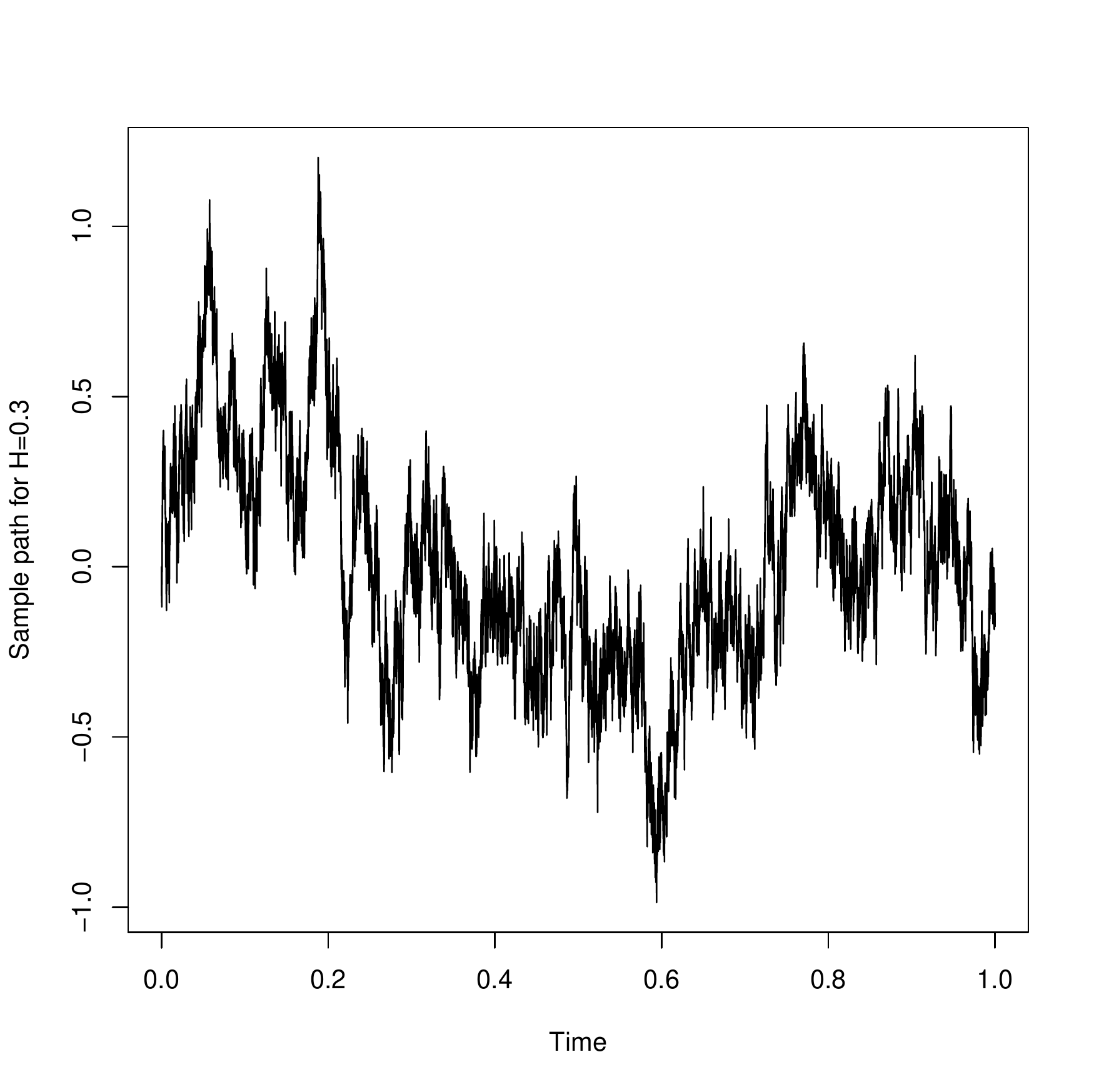}
\includegraphics[width=0.3\textwidth]{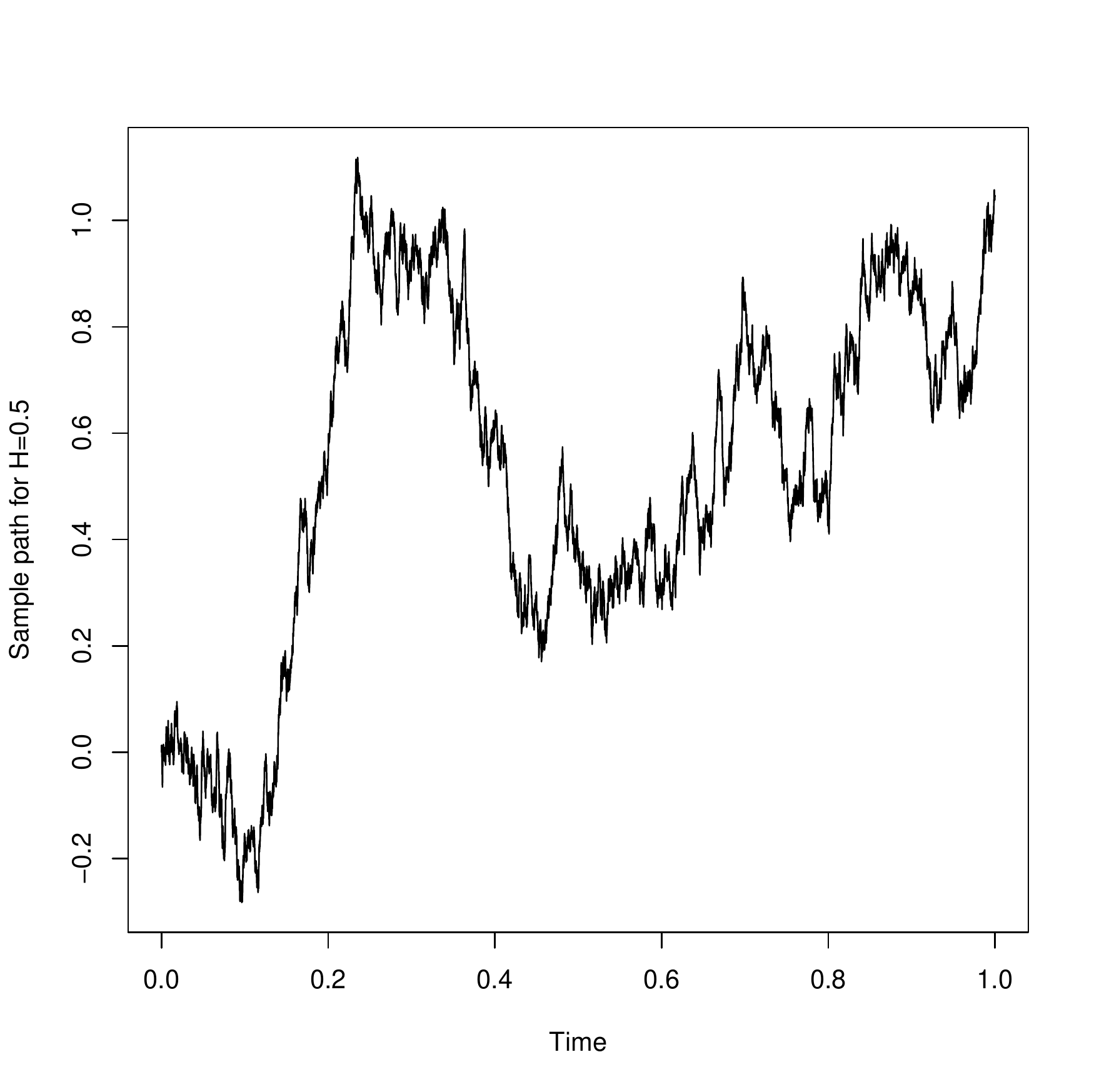}
\includegraphics[width=0.3\textwidth]{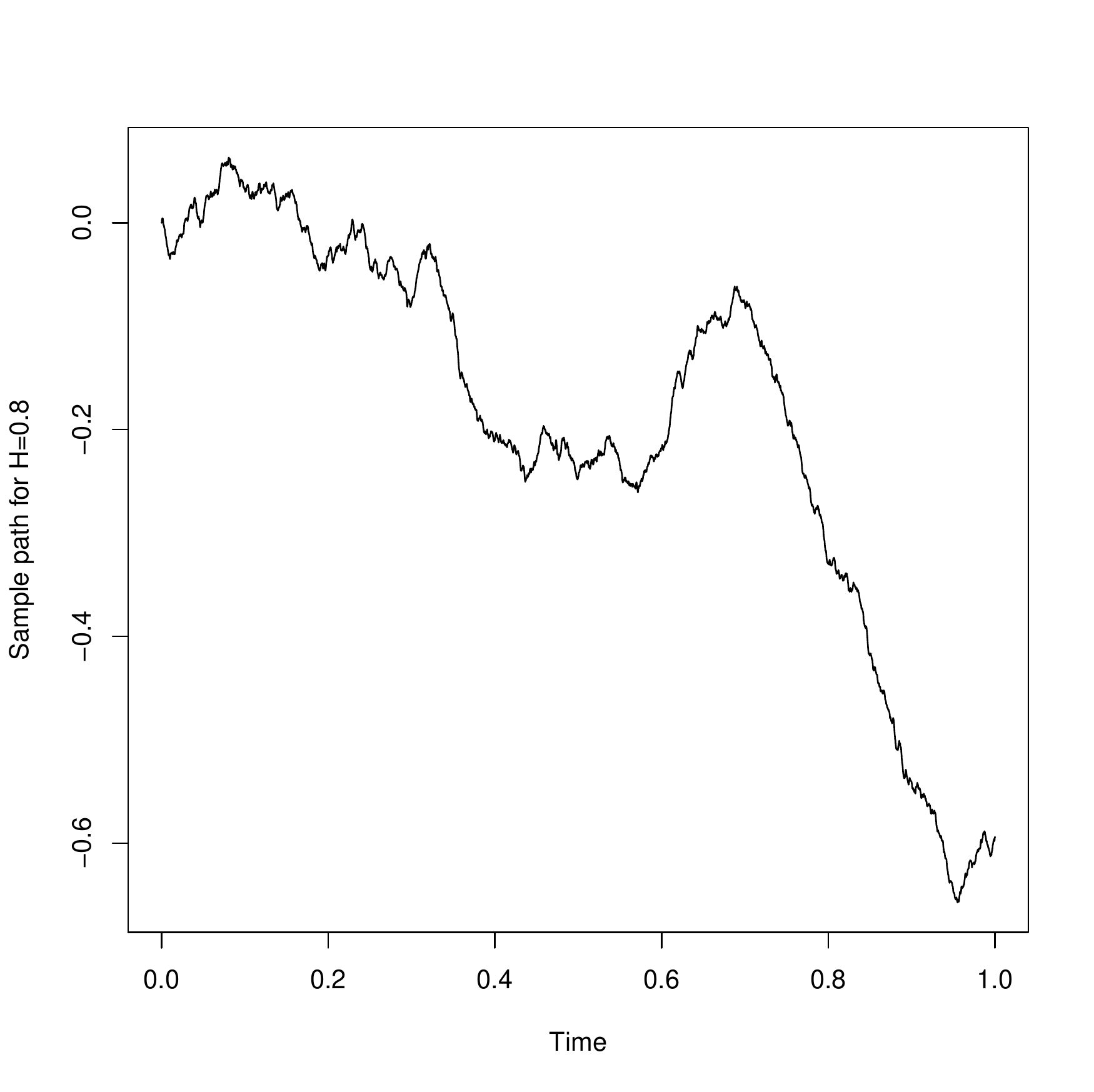}
\caption{Simulated paths of the SDE \eqref{eq:16} for $H=0.3$, $0.5$ and $0.8$ respectively.}
\end{figure}
%\marginpar{Which $H$?}

We simulate $100$ trajectories of a process $X$ defined by
\begin{equation}
dX_t = \sin (X_t + t)dt+\sigma dB^H_t,\quad X_0=0.
\label{eq:16}
\end{equation}
Some trajectories are displayed in Figure~\ref{fig:paths} for different values of $H$.
We calculate the MSE for $1000$, $2000$, $4000$ and $8000$ observations in the following settings:
\begin{enumerate}
\item[(S1)] $H=0.7$ on an interval $[0,\,1]$ with the ''standard'' estimator $\hat{H}$, $\sigma = 1$,
\item[(S2)] $H=0.7$ on an interval $[0,\,1]$ with $\hat{H}_1$ for $a^{(1)}:=(-1,\,1)$, $\sigma = 1$,
\item[(S3)] $H=0.7$ on an interval $[0,\,1]$ with $\hat{H}_1$ for $a^{(1)}:=(-1,\,1)$, $\sigma = 5$,
\item[(S4)] $H=0.7$ on an interval $[0,\,10]$ with $\hat{H}_1$ for $a^{(1)}:=(-1,\,1)$, $\sigma = 1$,
\item[(S5)] $H=0.98$ on an interval $[0,\,1]$ with the ''standard'' estimator $\hat{H}$, $\sigma = 1$,
\item[(S6)] $H=0.98$ on an interval $[0,\,1]$ with $\hat{H}_1$ for $a^{(1)}:=(1,\,-2,\,1)$, $\sigma = 1$,
\item[(S7)] $H=0.98$ on an interval $[0,\,10]$ with $\hat{H}_1$ for $a^{(1)}:=(1,\,-2,\,1)$, $\sigma = 1$.
\end{enumerate}
We obtain the following results:
\begin{center}
    \begin{tabular}{ c | c | c | c | c | c | c | c |}
    n & (S1) & (S2) & (S3) & (S4) & (S5) & (S6) & (S7) \\ \hline
    $1000$ & $2.23\cdotp 10^{-5}$ & $0.00051$ & $0.0004$ & $0.0005$ & $0.004$ & $0.001$ & $0.00085$ \\ \hline
    $2000$ & $9.27\cdotp 10^{-6}$ & $0.00027$ & $0.0002$ & $0.00025$ & $0.003$ & $0.00074$ & $0.00054$ \\ \hline
    $4000$ & $4.31\cdotp 10^{-6}$ & $0.00014$ & $0.00011$ & $0.00012$ & $0.003$ & $0.00034$ & $0.00025$ \\
    \hline
    $8000$ & $1.82\cdotp 10^{-6}$ & $7.67\cdotp 10^{-5}$ & $4.8\cdotp 10^{-5}$ & $5.95\cdotp 10^{-5}$ & $0.0025$ & $0.00014$ & $0.00015$ \\
     \hline   
    \end{tabular} .
\end{center}
Indeed, one can see that the estimators perform well for a broad range of conditions. For $H<\frac{3}{4}$, in case of known $\sigma$ and on the unit interval the simplest estimator $\hat{H}$ seems to perform best. However, for large $H$ its convergence becomes slower compared to that of $\hat{H}_1$ for the order $2$ filter. It is a known result (see e.g. \cite{BM}) that for the filter $(-1,\,1)$ the asymptotic rates of convergence for the quadratic variation of an fBm are $\sqrt{n\log (n)}$ in the case $H=\frac{3}{4}$ and $n^{2H-1}$ for $H>\frac{3}{4}$. This constraint seems to transfer to the process $X$. Figures 1 and 2 provide a visualisation for the difference in the convergence rates.

\begin{figure}[h]
\centering
\includegraphics[width=0.45\textwidth]{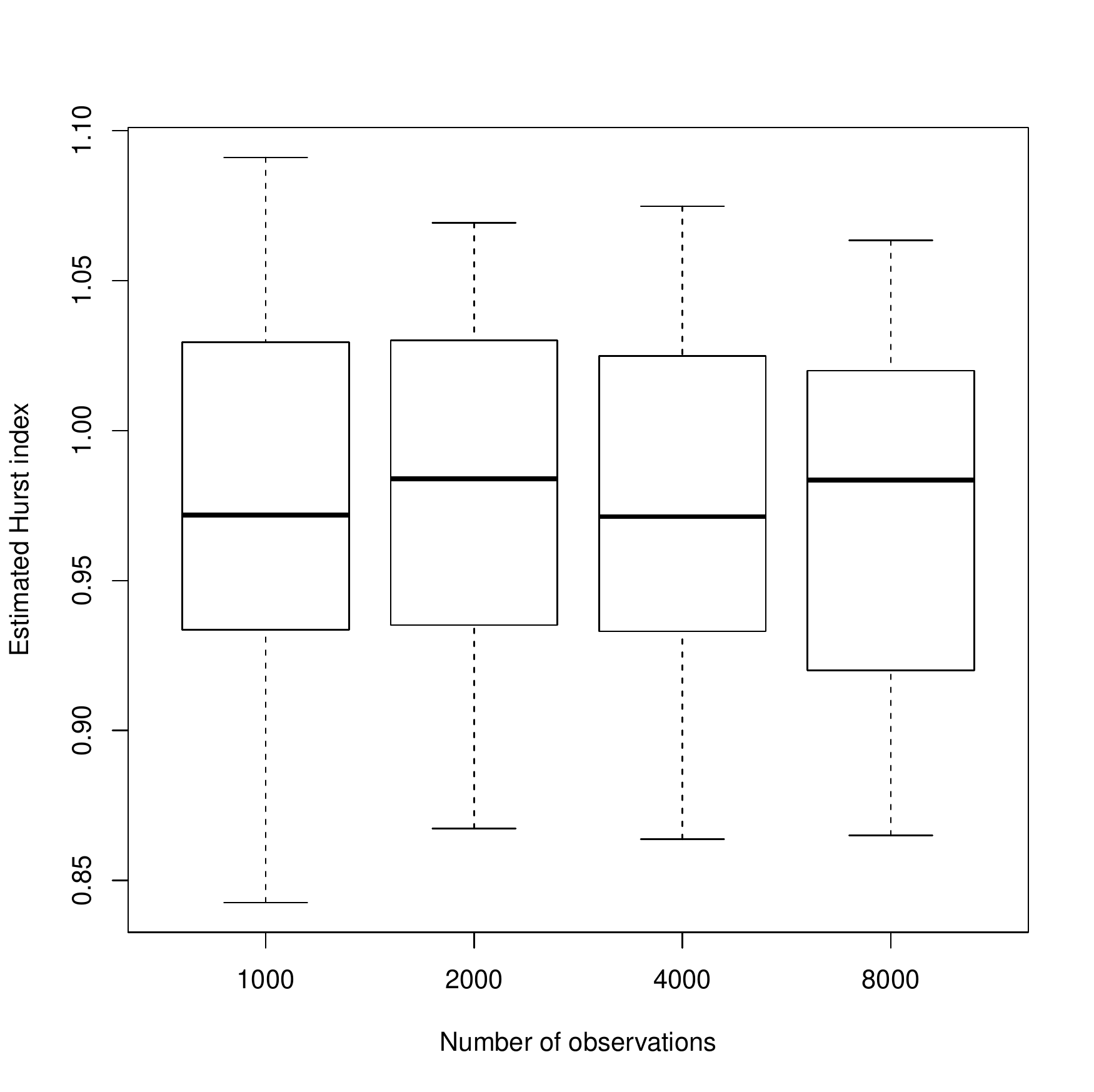}
\includegraphics[width=0.45\textwidth]{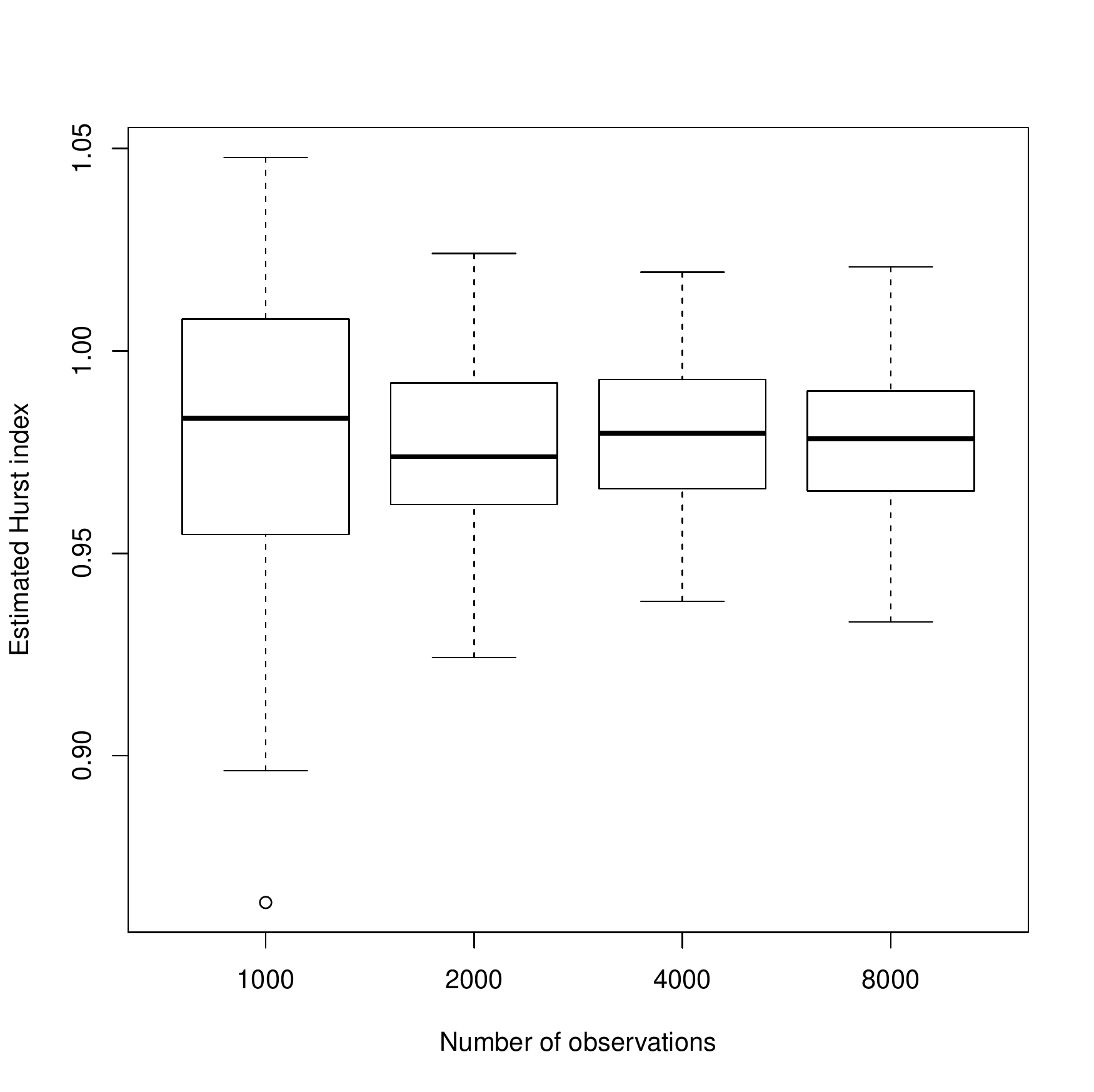}
\caption{A boxplot diagram for $\hat{H}$ in the setting (S5) and (S6).}
\end{figure}

\section{Appendix: Definitions and standard results from Malliavin calculus}\label{app}
%Malliavin caluculus was introduced by Paul Malliavin in 1974 with the initial purpose of proving a statement concerning the density of an SDE solution (as a part of the proof of H\"ormander's theorem in its probabilistc version). Since then Malliavin calculus has often been applied to problems concerning SDEs, especially in the financial context.\\
In the following we will talk about the main definitions of Malliavin calculus for the fractional Brownian motion case, following mainly \cite{Nua}. Throughout this section we will consider $T=[0,\,1]$.\\

For a real separable Hilbert space $(\mathcal{H},\,\langle \cdotp ,\, \cdotp\rangle_{\mathcal{H}})$ we call a stochastic process $W=\{W(h),\,h\in \mathcal{H}\}$ in a complete probability space $(\Omega,\,\mathcal{F},\,\PP)$ a Gaussian process on $\mathcal{H}$ if $W$ is a centred Gaussian family of random variables such that for all $f,\,g\in \mathcal{H}$
\[\E[W(f)W(g)]=\langle f,\,g\rangle_{\mathcal{H} }.\]
For a fractional Brownian motion $(B^H_t)_{t\in T}$ with Hurst parameter $H\in (0,\,1)$ let $\mathcal{H}$ be the closure of the set of indicator functions with respect to the inner product\
\[ \langle 1_{[0,\,s]},\,1_{[0,\,t]}\rangle_{\mathcal{H}}:= \frac{1}{2}(t^{2H}+s^{2H}-|t-s|^{2H}).\]
Then $B^H$ is via $B^H(1_{[0,\,s]})=B^H_s$ by definition a Gaussian process on $\mathcal{H}$, and this is how the notation $\mathcal{H}$ is used in this work.\\

For $H>\frac{1}{2}$ the space $\mathcal{H} $ does not only contain functions (see \cite{PTa1}), so we consider the following definition:\
\begin{equation}
\label{6s-3}|\mathcal{H} |:=\left\{f: [0,\,1]\to \R \text{ meas. s.t. }\Vert f\Vert _{\vert \mathcal{H}\vert } ^ {2}:= \int_0^1 \int_0^1 |f(v)||f(u)||u-v|^{2H-2}dvdu<\infty\right\}.
\end{equation}

It has been shown in \cite{PTa2} that $|\mathcal{H}|$ is a subspace of $\mathcal{H}$, which yields a sufficient condition for functions to belong to the space $\mathcal{H} $.\\

We have a useful representation of the inner product of two functions from the space $|\mathcal{H}|$. That is, if $H>\frac{1}{2}$, then for $f,\,g\in |\mathcal{H}|$ (see e.g. \cite{Nua})
\begin{equation}
\label{eq:17}
\langle f,\,g\rangle_{\mathcal{H}}= \underbrace{H(2H-1)}_{=:\alpha_H} \int_0^1 \int_0^1 f(v)g(u)|u-v|^{2H-2}du dv.
\end{equation}

For $H=\frac{1}{2}$ (that is, if $B^H$ is the usual Brownian motion) the space $\mathcal{H}$ is identical with the space $L^2([0,\,1])$ endowed with the usual inner product.\

For the Gaussian process $(B^H_t)_{t\in T}$ on $\mathcal{H}$ let $\mathcal{S}$ denote the set of smooth random variables of the form\
\[\mathcal{S}:=\left\{g (B^H(h_1),\dots ,B^H(h_n)) | g\in C^\infty _p (\R^n),\, h_1,\dots ,h_n\in \mathcal{H},\, n\geq 1 \right\},\]
where $C^\infty _p (\R^n)$ denotes the space of infinitely continuously differentiable functions $g:\R^n\mapsto \R$ such that $g$ and all its partial derivatives grow at most polynomially.\\
Then we can define the Malliavin derivative $D$ as an operator which acts on functions $F=g (B^H(h_1),\dots ,B^H(h_n)) \in \mathcal{S}$ by\
\[DF=\sum_{i=1}^n \frac{\partial g}{\partial x_i}(B^H(h_1),\dots , B^H(h_n))h_i,\]
which means that $DF$ is a random variable with values in $\mathcal{H}$. It does not depend on the choice of the function $g$ and of $h_1,\dots , h_n\in \mathcal{H}$.\\
For the subset of $\mathcal{S}$, whose functions $g$ have compact support, we will write $\mathcal{S}_0$.\\
$D$ is clearly linear and it holds: $D(B^H_t-B^H_s)=1_{[s,\,t]}(\cdotp)$.\\
%SHOW CLOSABILITY?\\
Moreover, $D:\mathcal{S} \to L^2(\Omega;\,\mathcal{H})$ is closable (this has been shown in \cite{Nua}) and we will denote by $\mathbb{D}^{1,\, 2}$ the closure of $\mathcal{S}$ with respect to the norm\
\[ \|F\|_{1,\,2}:=\left(\E[|F|^2]+\E[\|DF\|^2_{\mathcal{H}}]\right)^{\frac{1}{2}}.\]
For a fixed $h\in \mathcal{H} $ we define the operator $D^h$ on the set $\mathcal{S}$ by\
\[D^h F=\langle DF,\, h\rangle_{\mathcal{H}}.\]
Since Malliavin derivatives are $\mathcal{H}$-valued random variables, one can identify them with stochastic processes if those values in $\mathcal{H}$ are functions. In this case we will write $D_s X$ for $DX (s)$.\\

We have a chain rule for Malliavin derivatives, which is stated in \cite{Nua}: Let $f\in C(\R^m; \R)$ be a function with $\|\partial_i f\|_{\infty}<M_i< \infty$ for some $M_i>0$ ($i=1,\dots ,m$) and let $X_1,\dots X_m\in\mathbb{D}^{1,\,2}$. Then $f(X_1,\dots , X_m)\in \mathbb{D}^{1,\,2}$ and\
\[Df(X_1,\dots , X_m)=\sum_{i=1}^m \partial_i f(X_1,\dots ,X_m)DX_i.\]

Let $\mathcal{S}_{\mathcal{H}}$ denote the set\
\[\mathcal{S}_{\mathcal{H}}=\left\{\sum_{i=1}^n F_i h_i\Big| F_1,\dots ,F_n\in\mathcal{S},\,h_1,\dots , h_n\in \mathcal{H},\,n\geq 1 \right\}.\]
Then for $u=\sum_{i=1}^n F_i h_i$ we can define $Du := \sum_{i=1}^n DF_i \otimes h_i$ and consider the norm\
\[\|u\|_{1,\,2,\,\mathcal{H}}=\left(\E[\|u\|^2_{\mathcal{H}}]+\E[\|Du\|^2_{\mathcal{H}\otimes \mathcal{H}}]\right)^{\frac{1}{2}}.\]
Now we can, just as for the space $\mathcal{S}$, consider the closure of $\mathcal{S}_{\mathcal{H}}$ with respect to this norm and call it $\mathbb{D}^{1,\,2}(\mathcal{H})$.\\

The divergence operator, denoted by $\delta$, is the adjoint of the derivative operator $D$. Its domain $\operatorname{Dom} (\delta )$ is the set of random variables $u\in L^2(\Omega;\, \mathcal{H})$ such that\
\[|\E[\langle DF,\, u\rangle_{\mathcal{H}}]|\leq c_u \|F\|_2\]
for all $F\in \mathbb{D}^{1,\,2}$, where $c_u$ is a constant.\\
For $u\in \operatorname{Dom} (\delta )$ $\delta (u)$ satisfies the following characterising equation\
\[\E[F\delta(u)]=\E[\langle DF,\, u\rangle_{\mathcal{H}}]\]
for all $F\in \mathbb{D}^{1,\,2}$.\\

A result from \cite{Nua} provides a more direct characterisation for a certain elementary subset of $\operatorname{Dom} (\delta )$: let $u=\sum_{i=1}^n F_i h_i$ be an element of $\mathcal{S}_{\mathcal{H}}$. Then $u$ belongs to $\operatorname{Dom} (\delta )$ and we have\
\begin{equation}
\label{eq:18}
\delta (u)=\sum_{i=1}^n F_i B^H (h_i)-\sum_{i=1}^n \langle DF_i,\, h_i\rangle_{\mathcal{H}}.
\end{equation}
We need the following identity for the expectation of $\delta (u)\delta(v)$, where $u$ and $v$ belong to the domain of $\delta$ and have a certain specific form. In \cite{Nua} a statement for more general $u$ and $v$ is given, however, in the present work we only need this special case: for $F_1,\,F_2\in \mathbb{D}^{1,\,2}$, $h_1,\,h_2\in\mathcal{H}$ the variables $u:=F_1 h_1$, $v:=F_2 h_2$ belong to the space $\operatorname{Dom} (\delta)$ and
\begin{equation}
\label{eq:19}
\E[\delta (u)\delta(v)]=\E[\langle u,\,v \rangle_{\mathcal{H}}]+ \E[\langle DF_1, h_2 \rangle_{\mathcal{H}}\langle DF_2, h_1 \rangle_{\mathcal{H}}].
\end{equation}

\vskip0.3cm

{\bf Acknowledgement:} The financial support of the Collaborative Research Center 'Statistical modeling of non-linear dynamic processes' (SFB 823) is gratefully acknowledged.

\end{document}